\newcommand{\I}{\mathrm{I}}
\newcommand\II{\mathrm{I}\!\mathrm{I}}
\newcommand\III{\mathrm{I}\!\mathrm{I}\!\mathrm{I}}
\newtheorem{theorem}{Theorem}[section]
\newtheorem{lem}{Lemma}[section]
\newtheorem{prop}{Proposition}[section]
\newtheorem{defi}{Definition}[section]
\newtheorem{cor}{Corollary}[section]
\begin{document}
\title[Additive continuity of the renormalized volume under geometric limits]{\textbf{Additive continuity of the renormalized volume under geometric limits}}
\author{Franco Vargas Pallete}
\thanks{Research partially supported by NSF grant DMS-1406301}
\address{Department of Mathematics  \\
 University of California at Berkeley \\
775 Evans Hall \\
Berkeley, CA 94720-3860 \\
U.S.A.}
\email{franco@math.berkeley.edu}

\begin{abstract}
We study the infimum of the renormalized volume for convex-cocompact hyperbolic manifolds, as well as describing how a sequence converging to such values behaves. In particular, we show that the renormalized volume is continuous under the appropriate notion of limit. This result generalizes previous work in the subject.
\end{abstract}
\maketitle
\section{Introduction}
Renormalized volume is a quantity that gives a notion of volume for hyperbolic manifolds which have infinite volume under the classical definition. Its study for convex co-compact hyperbolic 3-manifolds can be found in \cite{KrasnovSchlenker}, while the geometrically finite case which includes rank 1-cusps was developed in \cite{MoroianuGuillarmouRochon}. In this article we will answer to what value is the infimum of the renormalized volume for a given convex-cocompact manifold with incompressible boundary, and how a sequence converging to the infimum behaves. Partial results to this question were given in the acylindrical case by the author in \cite{Vargas16}. The study of local minimum on the acylindrical case was done in parallel by \cite{Moroianu} and \cite{Vargas15}, which \cite{Vargas16} proves to be the infimum. The incompressoble case has be done independently by \cite{BBB17}, where they studied the gradient flow of the renormalized volume. In addition to their finding of the infimum value, we give the notion of additive geometric convergence in order to describe any sequence converging to the infimum.

The strategy to prove our main result is to understand how a sequence converging to the infimum of the renormalized volume could behave, and conclude that such limits are a special kind of geometrically finite hyperbolic manifold. In order to do so, the article is organized as follows: Section \ref{sec:background} deals with the basic concepts and establishes the notation that will be kept in later sections. Sections \ref{sec:limits} gives a description of sequences (up to taking a subsequence) with bounded volume of the convex core. This will serve us to describe our infimum sequence and the appropriate notion of limit. Section \ref{sec:perigeneric} shows that the Dirichlet fundamental polyhedra of a hyperbolic manifold $M$ is generic with respec to $S^2$. While similar results appear in \cite{JorgensenMarden} and \cite{DiazUshijima} (in fact our arguments are extrapolations of theirs), we include this section in a format that suits our needs and for completeness. Section \ref{sec:smalldef} uses the previous section and the classical Dehn-filling argument of \cite{Thurston5} to prove that small perturbations of the infimum limit are also obtained as limits of similar sequences. Section \ref{sec:addlimit} will show the appropriate version of continuity of the renormalized volume. Namely, the limit of renormalized volumes is equal to the sum of the renormalized volumes of each possible limit component. The argument of the section is a reproduction of the continuity argument of \cite{MoroianuGuillarmouRochon} under our notion of limit. Finally, in Section \ref{sec:consequences} we combine all the properties of previous sections to conclude that the infimum limit needs to be a critical point of the renormalized volume. These kind of manifolds are very special (the convex core is totally geodesic), giving the final answer of what value is the infimum of the renormalized volume and how a sequence converging to it behaves.

\textbf{Acknowledgements:} I would like to thank my advisor Ian Agol for his support and guidance during this project. I would also like to thank Kenneth Bromberg for our helpful discussion about our parallel approachs.

\section{Background}\label{sec:background}
A boundary incompressible compact $3$-manifold $M$ will be called \textit{hyperbolizable} if its interior admits a hyperbolic metric $g$. Under such metric, we will denote by $C_{(M,g)}$ the convex core of $(M,g)$ (minimal convex submanifold isotopic to $M$). The metric $g$ is said to be \textit{convex co-compact} if $C_{M}$ is compact. The space of convex co-compact metrics will de denoted by $QF(M)$, which by the Ahlfors-Bers theorem \cite{AhlforsBers} is homeomorphic to $\mathcal{T}(\partial M)$, the Teichm\"uller space of $\partial M$. If we would also like to consider parabolics, the appropriate property for $C_{M}$ is to have finite volume, in which case the metric is said to be \textit{geometrically finite}.

Given a convex co-compact hyperbolic 3-manifold $M$, Krasnov and Schlenker \cite{KrasnovSchlenker} defined its renormalized volume and calculated its first variation from the $W$-volume of a compact submanifold $N$ (as appears in [\cite{KrasnovSchlenker} Definition 3.1]) as
\begin{equation}
	W(M,N) = V(N) - \frac{1}{4}\int_{\partial N}Hda,
\end{equation}
where $da$ is the area form of the induced metric.

Expanding on the notation used in \cite{KrasnovSchlenker}, denote by $\I$ the metric induced on $\partial N$, $\II$ its second fundamental form (so $\II(x,y) = \I(x,By)$, where $B$ is the shape operator) and $\III(x,y) = \I(Bx,By)$ its third fundamental form.

If we further assume that $N$ has convex boundary and that the normal exponential map (pointing towards the exterior of $\partial N$) defines a family of equidistant surfaces $\lbrace S_r\rbrace$ that exhaust the complement of $N$ ($S_0= \partial N$), then the $W$-volume of $N_r$ (points on the interior of $S_r$) satisfies [\cite{Schlenker13} Lemma 3.6]
\begin{equation}\label{Wr}
	W(N_r) = W(N) - \pi r \chi(\partial N).
\end{equation}
Also, as observed in [\cite{Schlenker13} Definition 3.2, Proposition 3.3], $I^* =4 \displaystyle{\lim_{r\rightarrow\infty}e^{-2r}I_r}$ (where $I_r$ is the metric induced on $S_r$, which is identified with $S$ by the normal exponential map) exists and lies in the conformal class of the boundary. The analogous re-scaled limits for $\II, \III, B$ also exist and are denoted by $\II^*, \III^*, B^*$. 

For the case of convex co-compact manifolds, any metric $h$ at infinity that belongs to the conformal class given by the hyperbolic structure can be obtained as the rescaled limit of the induced metrics of some family of equidistant surfaces. Theorem 5.8 of \cite{KrasnovSchlenker} describes this by the use of Epstein surfaces (as stablished in \cite{Epstein}), which in turn allows us to define
\begin{equation}
	W(M,h) = W(M,N_r) + \pi r \chi(\partial M),
\end{equation}
where $\lbrace N_r\rbrace$ corresponds to the equidistant surfaces given by the Epstein surfaces of $h$. Then $W(M,h)$ is well-defined as a consequence of \eqref{Wr}.

We can finally define the renormalized volume of $M$ as
\begin{equation}
	V_R(M) = W(M,h),
\end{equation}
where $h$ is the metric in the conformal class at infinity that has constant curvature $-1$.

Krasnov and Schlenker \cite{KrasnovSchlenker} derived the variation formula of the $W$-volume in terms of the input at infinity (observe that because of the description by Epstein surfaces, $\I^*$ determines $\II^*$ and $\III^*$) from the volume variation of Rivin-Schlenker \cite{RivinSchlenker}.

As it is for example explained in \cite{Vargas15}, let us fix $c\in\mathcal{T}(\partial M)$ and some metric $\I_c$ that represents it. Then we have the variation of $V_R$ at $c$ [\cite{KrasnovSchlenker} Corollary 6.2, Lemma 8.5]
\begin{equation}
	DV_R(v) = -\frac{1}{4}\int_{\partial M}\langle D\mathrm{I}^*_c(v),\II^*_0 \rangle da^*,
\end{equation}
where the metric between tensors and the area form $da^*$ are defined from $\I_c$, $v\in RQ_c$ is the real part of a quadratic holomorphic differential with respect to $c$ and $\II^*_0 = \II^* - \frac{1}{2}\I^*$ is the traceless second fundamental form . This 2-form is (at each component of $\partial M$, after taking quotient by the action of $\pi_1(M)$) the negative of the real part of the Schwarzian derivative of the holomorphic map between one component of the region of discontinuity and a hyperbolic disk [\cite{KrasnovSchlenker} Lemma 8.3]. In particular (as explained in [\cite{Vargas15}, Section 2]) $\langle DI_c(v), \II_0\rangle = \langle v, \II_0\rangle$ pointwise. Then if we take $c$ to be a critical point (i.e.\ $DV_R(v)=0$ at $I_c$ for every $v\in RQ_c$) $\II_0$ must vanish at every point. This in turn implies that the holomorphic map between a component of the region of discontinuity and a disk has Schwarzian derivative identically zero, which means that the components are disks and the boundary of the convex core is totally geodesic.

There is an alternative approach to  $V_R$ defined in \cite{MoroianuGuillarmouRochon} that is really useful to prove continuity under limits, technique that we will adopt in Section \ref{sec:addlimit}. The equidistant foliation is interpreted by a function $\rho$ of $M$ (named \textit{geodesic boundary defining function}) satisfying:

\begin{equation}
\left|\frac{d\rho}{\rho}\right|_{g}^2=1, \quad (\rho^2g)|_{\partial M}=h^{\rm hyp}
\end{equation}

near the boundary of $M$, where $h^{\rm hyp}$ is the hyperbolic metric of $\partial M$ compatible with the conformal class at $\infty$. Then the renormalized volume of $M$ can be calculated as

\begin{equation}
	V_R(M)= \text{FP}_{z=0}\int_{M} \rho^z d\text{vol}.
\end{equation}

\cite{MoroianuGuillarmouRochon} also makes sense of this definition in the case where $(M,g)$ has parabolics.

\section{Geometrically finite limits}\label{sec:limits}
\begin{prop}\label{proplimit}
Let $M_n \in QF(M)$ be a sequence such that $V_C(M_n)$ is uniformly bounded. Then we can select finitely many base points such that (possibly after taking a subsequence)  $N_1,\ldots, N_k$ are the geometric limits corresponding those base points. These hyperbolic manifolds are geometrically finite and are obtained from $M$ by drilling curves into rank-$2$ cusps, pinching peripheral curves into rank-$1$ cusps or cutting essential cylinders forming one rank-$1$ cusp at each side. These cylinders are the division between distinct geometric limits $N_i$.
\end{prop}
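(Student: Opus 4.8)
The plan is to combine the thick--thin decomposition with the Cheeger--Gromov compactness theorem and then to read off the allowed degenerations from the behavior of the Margulis tubes. First I would fix a Margulis constant $\epsilon>0$ and apply the Margulis lemma to each $M_n$. Since $M_n\in QF(M)$ is convex co-compact it has no parabolics, so the $\epsilon$-thin part of $M_n$ is a disjoint union of Margulis tubes around short closed geodesics, and on the $\epsilon$-thick part the injectivity radius is at least $\epsilon/2$. The balls of radius $\epsilon/4$ about the points of a maximal $\epsilon/2$-separated net are then embedded and disjoint; comparing their total volume with the uniform bound $V_C(M_n)\le V$ shows that such a net, restricted to the thick part of the convex core, has cardinality bounded independently of $n$. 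Hence the thick part of $C(M_n)$ consists of boundedly many pieces of uniformly bounded diameter, possibly joined through long thin tubes. I would choose one base point $p_n^{(j)}$ in each piece; after passing to a subsequence the number $k$ of base points is constant.

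By Cheeger--Gromov compactness, for each $j$ a further subsequence of $(M_n,p_n^{(j)})$ converges geometrically to a complete pointed hyperbolic $3$-manifold $(N_j,p^{(j)})$, with bi-Lipschitz convergence on compact sets. Because the convex cores are the convex hulls of the limit sets and these converge with the groups, the convex core of each $N_j$ is the geometric limit of the relevant pieces of the $C(M_n)$ and therefore has volume at most $V$; a complete hyperbolic $3$-manifold whose convex core has finite volume is geometrically finite, so each $N_j$ is geometrically finite.

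The heart of the argument is to identify the three sources of parabolics in the $N_j$ from the geometry of the tubes. A Margulis tube around an interior short geodesic $\gamma_n$ with $\ell(\gamma_n)\to 0$ that remains within bounded distance of a single base point opens, in the limit, into a rank-$2$ cusp of the corresponding $N_j$, which is exactly the drilling of $\gamma_n$. A short curve arising from the pinching of a peripheral curve on $\partial M$, namely one whose length in the conformal boundary tends to $0$, produces a rank-$1$ cusp. Finally, a thin tube whose two ends are pulled to infinite distance from one another, separating two base points, degenerates into a rank-$1$ cusp on each side, and the associated essential cylinder is precisely the interface between the two distinct geometric limits that contain those base points. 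The incompressible-boundary hypothesis is what guarantees this list is exhaustive: since $\pi_1(\partial M)\hookrightarrow\pi_1(M)$, the relevant subgroups are freely indecomposable, so by Bonahon's tameness theorem the $N_j$ have no geometrically infinite ends (a conclusion already forced by the bounded core volume) and no accidental parabolics or compressions beyond those produced by the three operations above.

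The step I expect to be the main obstacle is this last identification: rigorously separating a tube that limits to a rank-$2$ cusp (interior drilling) from one that splits into two rank-$1$ cusps (cylinder cutting), and confirming that the cutting cylinders are exactly the interfaces between the $N_i$. This requires uniform control, along the subsequence, of each tube's core length and radius and of the distance it places between the base points on its two ends, together with a matching of the resulting parabolic subgroups against the topology of $M$.
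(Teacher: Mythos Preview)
Your outline is close to the paper's argument in its broad strokes---thick--thin decomposition, a ball-packing bound on the diameter of the $\epsilon$-thick components of $C(M_n)$, and geometric compactness to extract the limits $N_i$---and the packing argument you give is essentially the one the paper uses. The paper differs from you in two linked respects, and the second of them is exactly the resolution of what you flag as your ``main obstacle.''

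First, the paper does not pick base points directly in the thick pieces of $C(M_n)$. Instead it uses that each component of $\partial C(M_n)$ is path-isometric to a hyperbolic surface, applies the Deligne--Mumford compactification to these boundary surfaces, and places base points on closed geodesics of the resulting finite-type pieces $S_j$. Redundant base points (those remaining at bounded distance) are then discarded. This anchors the whole analysis to the boundary geometry rather than to the bulk.

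Second---and this is what you are missing---the classification of degenerating tubes is not made by measuring distances between base points, but by how each Margulis tube $T$ meets $\partial C(M_n)$. If $T\cap\partial C(M_n)=\emptyset$, the tube limits to a rank-$2$ cusp (drilling). If $T$ meets exactly one boundary component, the corresponding peripheral curve is shrinking and one obtains a rank-$1$ cusp (pinching); in this case $T$ does \emph{not} separate limits. If $T$ meets several boundary components, the components of $\partial T\cap C(M_n)$ are the cutting cylinders, each contributing a rank-$1$ cusp and locally separating distinct $N_i$. This intersection criterion is what cleanly distinguishes the three cases; your proposed criterion (whether the tube separates base points) does not by itself tell a rank-$2$ drilling from a cylinder cut, as you suspected. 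Your appeal to Bonahon's tameness and the incompressibility hypothesis to rule out further degenerations is not needed here and is somewhat misplaced: geometric finiteness of the $N_i$ follows directly from the bounded convex-core volume and the bounded-diameter argument, and the exhaustiveness of the three operations comes from the boundary-intersection trichotomy above, not from tameness.
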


\begin{proof}
Start recalling (see for instance \cite{Mardenbook}) that each boundary component of $C_{M_n}$ is path isometric to a surface of constant curvature, and say $S$ is one of such components. Then, by Deligne-Mumford compactification (see \cite{DeligneMumford} for the definition of the moduli stack $\mathcal{M}$, with compactifies Riemann surfaces by Riemann surfaces with nodes), we can assume that (after some relabelling) $S$ converges to a (possibly disconnected) union of hyperbolic surfaces of finite type $S_1,\ldots, S_l$, obtained by pinching some disjoint essential curves of $S$. Thus, since the path metric in $S$ bounds the metric in $M$, each component $S_j$ converges algebraically with the appropriately chosen base points (compare to the the proof of [\cite{BBCM16}, Theorem 2.8]). Since $S_j$ can not be elementary Riemann surface, select a closed geodesic and a basepoint on it. Do the same for all $j$ and all boundary components of $M$, and for each pair of basepoint sequences that stay at bounded distance, erase one of them. Each basepoint sequence gives a non-elementaty geometric limit, which we will label by $N_1,\ldots,N_k$.

Take $2\epsilon$ to be both a $2$ and $3$ dimensional Margulis constant. Then, since $V_C(M_n)$ is uniformly bounded, every component of the $\epsilon$-thick part of $C_{M_n}$, $C_{M_n}^\epsilon$, has finite diameter. Indeed, cover a component by an efficient cover of embedded balls of radius $2\epsilon$ (by efficient we mean that any two different centers are at least $2\epsilon$ apart). Then the balls with radius $\epsilon$ are disjoint, and either contained in $C_{M_n}$ or intersect its boundary. Since the volume of $C_{M_n}$ and the surface area of $\partial C_{M_n}$ are uniformly bounded (as well as for the $\epsilon$-neighbourhoods), the number of such balls is uniformly bounded as well. Each $N_i$ has associated one component of $C_{M_n}^\epsilon$. Moreover, following \cite{Mardenbook}(in particular Lema 4.3.1), if we pick base points other components of $C_{M_n}^\epsilon$ we can take a subsequence and assume that the geometric limit exists.

Consider any non-elementary geometric limit $N$ and assume that the limit basepoint lies on a closed geodesic of length greater than $2\epsilon$. Thanks to the uniform bounded diameter of the components of $C_{M_n}^\epsilon$, we know that all closed geodesics are at bounded distance from the basepoint, and from this we know that $N$ is geometrically finite. And if we take a neighborhood of $C_{N}^\epsilon$ (make $\epsilon$ small enough so it is connected) then it is the geometric limit set of $C_{M_n}^\epsilon$, or particularly of one the components that we considered from the start (with the appropriately chosen base points). So $\partial C_{N}$ corresponds to components of $\partial C_{M}$. Then the base point on the corresponding component $S_j \subseteq \partial C_M$ is at bounded distance, hence $N$ coincides with $N_i$ for some $1\leq i\leq k$.

From the previous paragraph we can also extrapolate that we can make $\epsilon$ smaller if necessary so $C_{N_i}^\epsilon$ is connected, and that there is a uniformly bounded amount of Margulis tubes that correspond only to cusps. This in turn says that different components of $C_{M_n}^\epsilon$ give different geometric limits since the distance between them goes to $\infty$.

The type of cusp at the limit can be described as follows: If the Margulis tube $T$ does not intersect the boundary of $C_{M_n}$, then corresponds to a rank-$2$ cusp. If $T$ intersects the boundary of $C_{M_n}$, will correspond to rank-$1$ cusp in one or many geometric limits $N_i$, depending on how it intersects the boundary. At every component $S$ of $\partial C_{M_n}$ intersected by $T$, the corresponding peripheral curve shrinks to a parabolic along the sequence (if not, from algebraic convergence of each subcomponent $S_1,\ldots,S_l$, the Margulis tube $T$ will eventually stay away from $S$). If $T$ only intersects one component $S$, the rank-$1$ cusp is created by pinching the peripheral representative at $S$, and $T$ does not separate different geometric limits $N_i$. If intersects multiple boundary components, then the different components of $\partial T \cap C_{M_n}$ are cylinders that locally separate geometric limits $N_i$. Each cylinder will correspond to a rank-$1$ cusp at a geometric limit $N_i$, and it will be called a \textit{cutting cylinder}.

Divide $M_n$ as follows: inside each Margulis tube $T$ draw a cylinder between its core geodesic and its peripheral geodesic representative at component of $\partial C_{M_n}$ intersected by $T$, and consider such cylinders to be disjoint. From the circles at $\partial C_{M_n}$ follow the normal geodesic flow to have a collection of $S^1\times \mathbb{R}$ cylinders, whose disjoint union we name by $\mathcal{C}$. Each component of $M_n \setminus \mathcal{C}$ has assigned a geometric limit $N_i$, so label these components as $M^i_n$. Inside each $M^i_n$ we have totally included $\epsilon$-Margulis tubes converging to rank-$2$ cusps (known as drilling) and the $\epsilon$-Margulis tubes intersecting $\partial C_{M_n}$ uniquely at $M^i_n$ converging to ranl-$1$ cusps (known as pinching).
\end{proof}

\begin{defi} We say that $N_1 \sqcup \ldots \sqcup N_k$ is the additive geometric limit of a sequence $\{M_n\}$ if they satisfy the properties described by Proposition \ref{proplimit}.
\end{defi}

Then an alternative formulation of \ref{proplimit} is that any sequence with bounded $V_C$ has a subsequence with a additive geometric limit. This notion of limit is related to Benjamini-Schramm convergence \cite{BenjaminiSchramm}. In their notion of limit, the basepoints are selected at random by a measure. Hence for our case, if we choose uniform measures on the convex cores, we will obtain $N_1 \sqcup \ldots \sqcup N_k$ with uniform measures on their convex cores, weighted out by their volumes.

\section{Peripherally generic fundamental domains}\label{sec:perigeneric}

For the present section, consider the projective Klein model of $\mathbb{H}^3$. Thus, $\mathbb{H}^3$ is represented by the unit ball in $\mathbb{P}^3$, $\partial\mathbb{H}^3$ by $S^2$, geodesic lines and planes are the intersection of euclidean lines and planes with $\mathbb{H}^3$. While this model is not conformal, orthogonality can be described by the pole and polar duality. This duality pairs points with planes, lines with lines, and can be extended from $\mathbb{R}^3$ to $\mathbb{P}^3$ as an algebraic map. Then a line $L$ and a plane $P$ are orthogonal if $P$ contains the polar of $L$. This is equivalent to $L$ containing the polar of $P$. 

 In the following section we will show that given a geometrically finite hyperbolic manifold $M$, its Dirichlet fundamental domain base at $p\in\overline{\mathbb{H}}^3$ will be \textit{peripherally generic} for almost all $p$. By peripherally generic we mean that (except when we refer to hyperplanes of the same abelian subgroup) no two different hyperplanes involved in the fundamental domain are tangent nor that their intersection is tanget to $S^2$, and that non three hyperplanes share a common line, as well as their triple intersection not to lie in $\partial\mathbb{H}^3$ (note that this is equivalent to the fundamental region at $S^2$ to be generic in the usual sense). While in \cite{Mardenbook} it is claim that generic Dirichlet fundamental regions are generic for most basepoints(not only peripherally generic), the proof in \cite{JorgensenMarden} is flawed as indicated in \cite{DiazUshijima}, where they solve the Fuchsian case. The techniques of the present section extrapolate their approach to the boundary of $M$.

Say then that $T$ is a isometry of $\mathbb{H}^3$. Define the $\textit{axis of } T$, $\text{Ax}(T)$, as the line between the two fixed points of $T$ at $S^2$ if $T$ is non parabolic, or the tanget line at the only fixed point of $T$ with direction tanget to any fiber of $T$, if $T$ parabolic. Observe then that $T$ sends $\text{Ax}(T)$ to itself in all cases, and since $T$ can be extended as a projective map from $\mathbb{P}^3$ to itself, then it must send the polar of $\text{Ax}(T)$, $\text{Ax}^*(T)$, to itself. This polar axis does not intersect the open unit ball, being only tangent to $S^2$ if $T$ is parabolic. Observe that unless $T$ is the identity or an involution, these are the only two lines preserved by $T$. One can also well define an isometry $\sqrt{T}$ such that has the same axis as $T$ and $\sqrt{T} \circ \sqrt{T} = T$.

For $p\in \mathbb{P}^3\setminus\text{Ax}^*(T)$ define the plane $P_{T,p}$ as the unique plane containing $\sqrt{T}(p)$ and $\text{Ax}^*(T)$. Observe that $T$ takes $P_{T^{-1},p}$ to $P_{T,p}$, and for $p\in\overline{\mathbb{H}}^3$, $p$ lies in between these planes. Then, if we define by $\mathcal{F}(L)$ to be the sheaf of planes containing a given line $L$, $P_{T,p}\in\mathcal{F}$ and the algebraic map $P_T:\mathbb{P}^3 \dashrightarrow \mathcal{F}(\text{Ax}^*(T))$ is not defined at $\mathbb{P}^3\setminus\text{Ax}^*(T)$. A geometric description for $P_T$ is the following. Take first $P$ is a tangent plane to $S^2$ containing $\text{Ax}^*(T)$, $P_T$ sends the points of $P$ to $P$ itself. Depending if $T$ is parabolic or not there could be one or two of these planes, but for planes in between $P_T$ uses $\text{Ax}^*(T)$ as an axis of rotation to find the target plane.

\begin{defi}
Let $T_1, T_2$ be two distinct isometries  of $\mathbb{H}^3$. Define $L_{T_1,T_2}(p) = P_{T_1}(p)\cap P_{T_2}(p)$ for $p\in\mathbb{P}^3$ such that $P_{T_1}(p), P_{T_2}(p)$ are defined and different.
\end{defi} 

From the definition of $P_T(.)$, we see that the codomain of $L_{T_1,T_2}$ should at least contain $\text{Ax}^*(T_1)\cup \text{Ax}^*(T_2)$, but could be greater is $P_{T_1}$ and $P_{T_2}$ are not generic for a given point. The following lemma answers this question.

\begin{lem}\label{codomainlema}
Let $T_1, T_2$ be two distinct isometries of $\mathbb{H}^3$ that don't generate a elliptic isometry. Then the codomain of $L_{T_1,T_2}$ is equal to:
\begin{enumerate}
	\item The tangent plane(s) to $S^2$ from $\text{Ax}^*(T_1)$, if $\text{Ax}^*(T_1) = \text{Ax}^*(T_2)$
	\item $\text{Ax}^*(T_1)\cup \text{Ax}^*(T_1)$, if $\text{Ax}^*(T_1)$ and $\text{Ax}^*(T_1)$ are not coplanar.
	\item $\text{Ax}^*(T_1)\cup \text{Ax}^*(T_1) \cup Q$, if $\text{Ax}^*(T_1)$ and $\text{Ax}^*(T_1)$ are distinct coplanar lines but the plane that contains them is not tangent to $S^2$. $Q$ is another coplanar line to them that contains $\text{Ax}^*(T_1)\cap\text{Ax}^*(T_2)$.
	\item $P$, where $P$ is the tangent plane to $S^2$ that contains $\text{Ax}^*(T_1)$ and $\text{Ax}^*(T_1)$.
\end{enumerate}
\end{lem}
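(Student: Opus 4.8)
The plan is to read the ``codomain'' of $L_{T_1,T_2}$ as the complement of its domain of definition, i.e.\ the set of $p$ at which the recipe $P_{T_1}(p)\cap P_{T_2}(p)$ fails to produce a line. This locus splits into two pieces: the points where one of the two planes is not even defined, which is exactly $\text{Ax}^*(T_1)\cup\text{Ax}^*(T_2)$ (this is why the codomain must ``at least contain'' that union), and the points where both planes are defined but equal, $\{p:P_{T_1}(p)=P_{T_2}(p)\}$ (this is the part that ``could be greater''). So the entire content is to compute the coincidence locus in each of the four configurations of the two polar axes.

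The structural input I would set up first is that $P_{T_i}$ is the projective automorphism of the pencil $\mathcal{F}(\text{Ax}^*(T_i))$ induced by $\sqrt{T_i}$: since $\sqrt{T_i}$ preserves $\text{Ax}^*(T_i)$, one has $P_{T_i}(p)=\sqrt{T_i}(\mathrm{plane}(p,\text{Ax}^*(T_i)))$. I would then identify the planes fixed by this automorphism with the tangent planes to $S^2$ through $\text{Ax}^*(T_i)$, using pole--polar duality: the tangent plane at a fixed point $x$ of $T_i$ is the polar of $x$, and because $x\in\text{Ax}(T_i)$ its polar contains $\text{Ax}^*(T_i)$ and is preserved by $T_i$, hence by $\sqrt{T_i}$. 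The organizing observation is that $P_{T_1}(p)=P_{T_2}(p)$ forces the common plane to contain $\text{Ax}^*(T_1)\cup\text{Ax}^*(T_2)$, so the cases are governed by how many planes contain both polar axes.

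Then I would run the case analysis. When the polar axes are skew (2), no plane contains both, so the planes never coincide and the locus is exactly $\text{Ax}^*(T_1)\cup\text{Ax}^*(T_2)$. When they are coplanar with spanning plane $P$ tangent to $S^2$ (4), the tangency point is a common fixed point of $T_1$ and $T_2$, so $P$ is fixed by both $\sqrt{T_i}$ and therefore $P_{T_i}(p)=P$ for every $p\in P$; the planes coincide on all of $P$. When they are coplanar but the spanning plane $\Pi$ is not tangent (3), $\Pi$ is the only plane containing both axes and it is fixed by neither $\sqrt{T_i}$, so $P_{T_1}(p)=P_{T_2}(p)=\Pi$ cuts out the single line $\sqrt{T_1}^{-1}(\Pi)\cap\sqrt{T_2}^{-1}(\Pi)$ through $\text{Ax}^*(T_1)\cap\text{Ax}^*(T_2)$, which is the claimed $Q$. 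Finally, when the polar axes coincide to one line $L$ (1), both maps act on the same pencil $\mathcal{F}(L)$ and coincidence means the pencil parameter is fixed by $g=\sqrt{T_1}^{-1}\sqrt{T_2}$; pulling the fixed planes of $g$ back along the projection from $L$ recovers the tangent planes through $L$.

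I expect case (1) to be the main obstacle. One must show that $g$ acts nontrivially on $\mathcal{F}(L)$ with fixed set exactly the tangent planes, and here the hypothesis that $T_1,T_2$ do not generate an elliptic is essential: a rotation about the common axis acts trivially on $\mathcal{F}(L)$ (it fixes every plane through the polar axis), so if $g$ were elliptic the two planes would coincide everywhere and the answer would change. Thus I would carefully separate the loxodromic (translation-plus-rotation) and parabolic subcases, noting that only the translation part acts on the pencil, and use the non-elliptic assumption to guarantee a nontrivial action fixing precisely the tangent planes. The second delicate point is pinning down the exact position of the line $Q$ in case (3) relative to $\text{Ax}^*(T_1)\cap\text{Ax}^*(T_2)$, which I would confirm by a direct coordinate computation in the Klein model.
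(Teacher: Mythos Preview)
Your approach matches the paper's exactly: read the ``codomain'' as the indeterminacy locus, split it into $\text{Ax}^*(T_1)\cup\text{Ax}^*(T_2)$ plus the coincidence set $\{p:P_{T_1}(p)=P_{T_2}(p)\}$, observe that any common plane must contain both polar axes, and run the four-case analysis on how the polar axes sit (skew, coplanar tangent, coplanar non-tangent, equal) with $Q=\sqrt{T_1}^{-1}(\Pi)\cap\sqrt{T_2}^{-1}(\Pi)$ in case~(3). Your treatment of case~(1) via the induced action of $\sqrt{T_1}^{-1}\sqrt{T_2}$ on the pencil $\mathcal{F}(L)$ is in fact more careful than the paper's terse handling --- the paper simply asserts that off the tangent planes $P_{T_1}(p)\neq P_{T_2}(p)$ --- and your observation that the non-elliptic hypothesis is what rules out a trivial pencil action there is exactly the missing justification.
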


\begin{proof}
For case (1), the polar axes coincide if and only if the axes coincide. Then the tangent plane(s) are send to themselves by $P_{T_1}$ and $P_{T_2}$, so the intersection will no be generic. For all other points, $P_{T_1}$ and $P_{T_2}$ only intersect at $\text{Ax}^*(T_1)$.

For case (2), we see that since there is not common plane containing them, the intersection $P_{T_1}\cap P_{T_2}$ is always a line.

For case (3), the points $p$ that will give us trouble will be the ones such that $P_{T_1}(p)$ and $P_{T_2}(p)$ are the common plane $P_0$. Since $P_0$ is not tanget to $S^2$, then $P_{T_1}^{-1}(P_0)$ is a plane distinct from $P_0$ that contains $\text{Ax}^*(T_1)$ (similarly for $T_2$). Then the set of trouble points is $P_{T_1}^{-1}(P_0) \cap P_{T_2}^{-1}(P_0) = Q$, which is a line since $P_{T_1}^{-1}(P_0), P_{T_2}^{-1}(P_0)$ are both distinct to $P_0$. The last property of $Q$ follow easily.

For case (4), similar to case (3), the extra codomain comes from $P_{T_1}^{-1}(P) \cap P_{T_2}^{-1}(P)$, but in this case the are both equal to $P$.

\end{proof}

Notice that in case $(4)$, $T_1$ and $T_2$ have a common fix point in $S^2$, which is the point of tangency of the plane containing them.

As in \cite{DiazUshijima}, $L_{T_1,T_2}$ is an algebraic map that could be extended to some of the points of its codomain as a map $\widehat{L}_{T_1,T_2}: \mathbb{P}^3 \dashrightarrow \mathcal{L}$, where $\mathcal{L}$ is the space of lines in $P^3$. What would be the new codomain and the nature of the extension is answered in the next lemma.

\begin{lem}
Let $T_1, T_2$ be two distinct isometries of $\mathbb{H}^3$that don't generate a elliptic isometry. Moreover, assume they are in cases $(1)-(3)$ of Lemma \ref{codomainlema}. Then the codomain of $\widehat{L}_{T_1,T_2}$ is equal to:
\begin{enumerate}\label{codomainlema2}
	\item $\emptyset$, if $\text{Ax}^*(T_1) = \text{Ax}^*(T_2)$. $\widehat{L}$ is constant equal to $\text{Ax}^*(T_1)$.
	\item $\text{Ax}^*(T_1)\cup \text{Ax}^*(T_2)$, if $\text{Ax}^*(T_1)$ and $\text{Ax}^*(T_1)$ are not coplanar.
	\item $\text{Ax}^*(T_1)\cup \text{Ax}^*(T_2) \cup Q$, if $\text{Ax}^*(T_1)$ and $\text{Ax}^*(T_1)$ are distinct coplanar lines but the plane that contains them is not tangent to $S^2$. $Q$ is another coplanar line to them as described in Lema \ref{codomainlema}.
	%\item $\text{Ax}^*(T_1)\cap\text{Ax}^*(T_2)$, if $\text{Ax}^*(T_1)$ and $\text{Ax}^*(T_2)$ are coplanar and the plane $P$ containing them is tangent to $S^2$. $\widehat{L}$ extends for points $p\in P$ as the line that joins $p$ with $\text{Ax}^*(T_1) \cap \text{Ax}^*(T_2)$.
\end{enumerate}
\end{lem}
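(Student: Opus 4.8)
The plan is to read the answer off Lemma \ref{codomainlema}. Writing $A_i:=\text{Ax}^*(T_i)$, recall that the codomain of $\widehat{L}_{T_1,T_2}$ means the set of points of $\mathbb{P}^3$ at which the extended map is still not defined. Since $\widehat{L}_{T_1,T_2}$ agrees with $L_{T_1,T_2}$ wherever the latter is defined, its codomain is contained in that of $L_{T_1,T_2}$, namely $A_1$ (the tangent plane case aside), $A_1\cup A_2$, or $A_1\cup A_2\cup Q$ in cases (1)--(3). So in each case I only have to decide, point by point on these lines, whether a limit of $L_{T_1,T_2}(p)$ exists (the point is then removed) or not (the point survives). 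Throughout I will use that $p\mapsto P_{T_i}(p)$ is, up to the projective isomorphism $\sqrt{T_i}$, the linear projection of $\mathbb{P}^3$ away from the line $A_i$ onto the pencil $\mathcal{F}(A_i)$; hence, as $p$ approaches a point of $A_i$ along varying directions, $P_{T_i}(p)$ ranges over the entire pencil $\mathcal{F}(A_i)$.

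For case (1) I would invoke the proof of Lemma \ref{codomainlema}: once $A_1=A_2$, the planes $P_{T_1}(p)$ and $P_{T_2}(p)$ both contain this common line and are distinct off the tangent plane(s), so that $L_{T_1,T_2}(p)=A_1$ identically there. I would then extend by the constant value $A_1$: approaching any point of $\mathbb{P}^3$ (in particular a point of a tangent plane, or of the axis where $P_{T_i}$ itself fails to be defined) through the dense open set where $L_{T_1,T_2}$ is defined yields the limit $A_1$. Thus $\widehat{L}_{T_1,T_2}$ is everywhere defined and constant equal to $A_1$, so its codomain is empty.

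For cases (2) and (3) I would prove the reverse inclusion, i.e.\ that every point of $A_1$, of $A_2$ (and of $Q$) is a genuine point of indeterminacy. Fix $p_0\in A_1$ so that $P_{T_2}(p_0)$ is a well-defined plane $\Pi_2\supseteq A_2$ not containing $A_1$; in case (2) this holds for every $p_0\in A_1$ because the skew axes are disjoint and no plane contains both, while in case (3) it holds off the proper closed subset of $A_1$ where the axes meet or where $P_{T_2}(p_0)=P_0$. As $p\to p_0$ along different directions, $P_{T_1}(p)$ sweeps the pencil $\mathcal{F}(A_1)$, and $P_{T_1}(p)\cap\Pi_2$ sweeps the pencil of lines of $\Pi_2$ through the single point $A_1\cap\Pi_2$; this family is non-constant, so $L_{T_1,T_2}$ has no limit at $p_0$ and $p_0$ stays in the codomain. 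Since the codomain is closed and contains a dense subset of $A_1$ (and symmetrically of $A_2$), it contains both axes. In case (3) the same mechanism handles $Q$: on $Q$ both planes coincide with $P_0$, while near a generic $q_0\in Q$ the planes $P_{T_1}(p)$ and $P_{T_2}(p)$ tilt independently off $P_0$ about $A_1$ and $A_2$, so their intersection line depends on the direction of approach and $q_0$ is again indeterminate.

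The main obstacle is precisely this ``reverse'' half: one must check that the limiting lines genuinely vary with the direction of approach rather than collapsing to one line. The clean argument is the projection description above, which reduces the matter to the elementary fact that, for a plane $\Pi_2$ meeting the line $A_1$ in a single point, the slices $\Pi\cap\Pi_2$ by the planes $\Pi\in\mathcal{F}(A_1)$ form an honest pencil of lines. The only care needed is to excise the finitely many special points of each axis -- the intersection $A_1\cap A_2$ and the points where $P_{T_i}$ already lands on $P_0$ -- and then to restore them using the fact that the codomain, being an indeterminacy locus, is closed.
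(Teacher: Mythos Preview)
Your proof is correct and follows essentially the same strategy as the paper: a case-by-case analysis in which one shows that, along the candidate indeterminacy lines, the directional limits of $L_{T_1,T_2}$ disagree. The only minor difference is your treatment of $Q$ in case (3), where the paper makes your ``tilt independently'' remark concrete by approaching $q_0\in Q$ inside the plane spanned by $Q$ and $A_1$ (which is $P_{T_1}^{-1}(P_0)$, forcing $P_{T_1}(p)=P_0$ and hence $L_{T_1,T_2}(p)=A_2$), and symmetrically inside the plane spanned by $Q$ and $A_2$ to get limit $A_1$.
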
 
\begin{proof}
Let us look to each case of \ref{codomainlema}:
\begin{enumerate}
	\item Obvious since the map was already constant.
	\item Notice that while approaching a point in $\text{Ax}^*(T_1)$ you can obtain any possible plane containing $\text{Ax}^*(T_1)$, but $P_{T_2}$ converge to a unique plane not containing $\text{Ax}^*(T_1)$. Then the map $\mathcal{L}$ cannot be extended for points in $\text{Ax}^*(T_1)$, and similarly for $\text{Ax}^*(T_2)$.
	\item Since the plane containing $\text{Ax}^*(T_1),\text{Ax}^*(T_2)$ is not tangent to $S^2$ (hence not constant under $T_1,T_2$), the same argument as in the previous case proofs that $\mathcal{L}$ cannot be extende to $\text{Ax}^*(T_1)\cup\text{Ax}^*(T_2)$. And as for the line $Q$, approaching from the plane generated by $(Q,\text{Ax}^*(T_1))$ will conclude that the extension needs to be $\text{Ax}^*(T_2)$ but then we will have the same by swaping the indices $1,2$, which makes the extension impossible.
\end{enumerate}
\end{proof}

\begin{defi}
Let $T_1,T_2,T_3$ distinct non-elliptic isometries of $\mathbb{H}^3$. We denote by $\mathcal{A}_{T_1,T_2,T_3}$ the subset:
\[\mathcal{A}_{T_1,T_2,T_3}= \{p\in \mathbb{P}^3 | L_{T_1,T_2}(p)= L_{T_1,T_3}(p)\} \cup \text{codomain}(L_{T_1,T_2}) \cup \text{codomain}(L_{T_1,T_3})
\]
\end{defi}

It is not hard to see as in \cite{DiazUshijima}

\begin{lem}\label{Aalgebraic}
The set $\mathcal{A}_{T_1,T_2,T_3}$ is an algebraic subset of $\mathbb{P}^3$.
\end{lem}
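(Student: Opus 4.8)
The goal is to show that
\[
\mathcal{A}_{T_1,T_2,T_3}= \{p\in \mathbb{P}^3 \mid L_{T_1,T_2}(p)= L_{T_1,T_3}(p)\} \cup \text{codomain}(L_{T_1,T_2}) \cup \text{codomain}(L_{T_1,T_3})
\]
is an algebraic subset of $\mathbb{P}^3$. The plan is to exhibit each of the three pieces as an algebraic set and then use that finite unions of algebraic sets are algebraic. The two codomain pieces are the easiest: by Lemma \ref{codomainlema} each codomain is a finite union of lines (or tangent planes), and lines and planes in $\mathbb{P}^3$ are cut out by linear equations, hence are algebraic; a finite union of such is algebraic.

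The substantive piece is the coincidence locus $\{p \mid L_{T_1,T_2}(p)=L_{T_1,T_3}(p)\}$. First I would recall that $P_{T,p}$ is the unique plane through $\sqrt{T}(p)$ and $\text{Ax}^*(T)$, and that $\sqrt{T}$ extends to a projective linear map on $\mathbb{P}^3$; therefore the assignment $p \mapsto P_{T,p}$ is given by homogeneous polynomial (in fact linear, after the projective-linear substitution $p \mapsto \sqrt{T}(p)$) expressions in the homogeneous coordinates of $p$, realizing $P_T$ as a rational map $\mathbb{P}^3 \dashrightarrow \mathcal{F}(\text{Ax}^*(T)) \subset (\mathbb{P}^3)^*$. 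Concretely, writing $\text{Ax}^*(T)$ via two fixed points in $(\mathbb{P}^3)^*$ (equivalently, as the intersection of two fixed base planes), the plane $P_{T,p}$ corresponds to a point in the pencil spanned by those two base planes, and its coordinates are polynomial in $p$. Then $L_{T_1,T_2}(p)=P_{T_1}(p)\cap P_{T_2}(p)$ is obtained as a Pl\"ucker line whose homogeneous line coordinates are bilinear (hence polynomial) in the coordinates of the two planes, and thus polynomial in $p$.

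With $L_{T_1,T_2}(p)$ and $L_{T_1,T_3}(p)$ represented by vectors of Pl\"ucker coordinates that are polynomial in $p$, the condition that the two lines coincide is the condition that these two Pl\"ucker vectors are proportional in $\mathbb{P}^5$. I would encode proportionality of two vectors by the vanishing of all their $2\times 2$ minors, each of which is a homogeneous polynomial in $p$; the common zero locus of these minors is by definition a (projective) algebraic set. This is the same device used in \cite{DiazUshijima}, and the argument here simply transports it to $\mathbb{P}^3$. Finally, the union of the coincidence locus with the two codomains is a finite union of algebraic sets, so it is algebraic, which is exactly Lemma \ref{Aalgebraic}.

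The main obstacle is bookkeeping rather than conceptual: one must check that the Pl\"ucker-coordinate description of $L_{T_1,T_2}$ is genuinely polynomial (not merely rational) in $p$, i.e.\ that after clearing denominators one obtains honest homogeneous polynomials, and that the indeterminacy locus of the rational map $P_T$ (namely $\text{Ax}^*(T)$) does not spoil the algebraicity of the final set. The clean way to handle this is to absorb the indeterminacy into the codomain terms already present in the definition of $\mathcal{A}_{T_1,T_2,T_3}$, which is why those codomains are included in the union; since we are taking a union with the (Zariski-closed) codomains and with a locus defined by vanishing of minors, any such bad behavior on $\text{Ax}^*(T_1)\cup\text{Ax}^*(T_2)\cup\text{Ax}^*(T_3)$ is harmless.
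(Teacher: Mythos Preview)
Your proposal is correct and is precisely the argument the paper has in mind: the paper gives no proof beyond ``It is not hard to see as in \cite{DiazUshijima}'', and what you have written is exactly the standard unwinding of that reference, representing $P_T$ and $L_{T_1,T_2}$ by homogeneous polynomials (via dual and Pl\"ucker coordinates) and encoding the coincidence $L_{T_1,T_2}(p)=L_{T_1,T_3}(p)$ as the vanishing of the $2\times 2$ minors. Your treatment of the indeterminacy loci (the paper's ``codomains'') via Lemma~\ref{codomainlema} is also the intended one.
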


Similarly, we can define the set of basepoints that give non peripherally generic bisecting planes

\begin{defi}
Let $T_1,T_2,T_3$ distinct non-elliptic isometries of $\mathbb{H}^3$. We denote by $\mathcal{D}_{T_1,T_2}$, $\mathcal{D}_{T_1,T_2,T_3}$ the subsets:
\[\mathcal{D}_{T_1,T_2} = \{p\in\mathbb{P}^3| L_{T_1,T_2}(p) \text{ is tangent to } S^2 \} \cup \text{codomain}(L_{T_1,T_2})
\]

\[\mathcal{D}_{T_1,T_2,T_3} = \{p\in\mathbb{P}^3\setminus \mathcal{A}_{T_1,T_2,T_3} |  (L_{T_1,T_2}(p)\cap L_{T_1,T_3}(p))\in S^2 \} \cup \mathcal{A}_{T_1,T_2,T_3}
\]
\end{defi}

The following lemma follows straight out of the definition.

\begin{lem}\label{Dalgebraic}
The sets $\mathcal{D}_{T_1,T_2},\mathcal{D}_{T_1,T_2,T_3}$ are algebraic subsets of $\mathbb{P}^3$.
\end{lem}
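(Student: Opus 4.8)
The plan is to reduce the claim to the corresponding statement about the maps $L_{T_1,T_2}$ and about intersections of their images, so that each of the two sets is exhibited as a finite union of zero sets of polynomial equations in the homogeneous coordinates of $\mathbb{P}^3$. The essential point, established in Lemma \ref{codomainlema} and Lemma \ref{codomainlema2}, is that $L_{T_1,T_2}$ and its extension $\widehat{L}_{T_1,T_2}$ are algebraic maps away from their (algebraic) codomains; that is, in homogeneous coordinates the Pl\"ucker coordinates of the line $L_{T_1,T_2}(p)$ are given by homogeneous polynomials in $p$ (coming from the fact that $P_{T_i}(p)$ is the span of $\sqrt{T_i}(p)$ and the fixed line $\mathrm{Ax}^*(T_i)$, hence its coefficients are linear in $p$, and intersecting two such planes is a bilinear/Pl\"ucker operation). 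This is exactly the input we are entitled to assume from the earlier lemmas, and it is the content of the phrase ``follows straight out of the definition.''

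First I would treat $\mathcal{D}_{T_1,T_2}$. Tangency of a line to the quadric $S^2$ (here the unit sphere, a fixed quadratic form $q$) is the vanishing of a discriminant: writing the line as the span of two points whose coordinates are polynomial in $p$, the condition that the restriction of $q$ to that line be degenerate is the vanishing of a $2\times 2$ determinant built from $q$, and this determinant is a polynomial in the entries, hence a polynomial in $p$. Thus $\{p : L_{T_1,T_2}(p)\text{ tangent to }S^2\}$ is the zero set of one polynomial on the locus where $L_{T_1,T_2}$ is defined. Adjoining $\mathrm{codomain}(L_{T_1,T_2})$, which is algebraic by Lemma \ref{codomainlema}, realizes $\mathcal{D}_{T_1,T_2}$ as a union of two algebraic sets, hence algebraic.

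Next I would treat $\mathcal{D}_{T_1,T_2,T_3}$. On $\mathbb{P}^3\setminus\mathcal{A}_{T_1,T_2,T_3}$ the two lines $L_{T_1,T_2}(p)$ and $L_{T_1,T_3}(p)$ are defined and distinct, so their intersection point (when it exists) is again a polynomial expression in $p$ via the Pl\"ucker/meet operation; the condition that this point lie on $S^2$ is the single quadratic equation $q=0$ evaluated there, again polynomial in $p$. So the first set in the definition of $\mathcal{D}_{T_1,T_2,T_3}$ is (the restriction to the complement of $\mathcal{A}$ of) an algebraic set, and we then take its union with $\mathcal{A}_{T_1,T_2,T_3}$, which is algebraic by Lemma \ref{Aalgebraic}; a finite union of algebraic sets is algebraic, so we are done. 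The main obstacle, and the only place demanding care, is bookkeeping the indeterminacy: the meet of two planes (or two lines) is only polynomial where they are in general position, so one must verify that the ``bad'' loci where the polynomial expressions degenerate or vanish identically are precisely absorbed into the codomain terms (for $\mathcal{D}_{T_1,T_2}$) or into $\mathcal{A}_{T_1,T_2,T_3}$ (for $\mathcal{D}_{T_1,T_2,T_3}$), so that on the complement the polynomials genuinely compute the geometric objects and the union with the codomain/$\mathcal{A}$ term closes up the set. This is exactly the pattern used in \cite{DiazUshijima}, and our two preceding lemmas supply the needed algebraicity of the codomains, so the argument is a direct transcription.
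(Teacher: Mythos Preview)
Your proposal is correct and is essentially the argument the paper has in mind: the paper gives no proof beyond ``follows straight out of the definition,'' and your write-up simply unpacks that remark by observing that tangency and incidence with $S^2$ are polynomial conditions in the (polynomial) Pl\"ucker data of $L_{T_1,T_2}(p)$, with the indeterminacy loci absorbed by the codomain and $\mathcal{A}$ terms already known to be algebraic. There is nothing different in spirit from the paper; you have just made explicit what the author leaves implicit.
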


Since we are interested in cases when this algebraic sets are proper, let us describes the cases where they are not in the following lemmas.

\begin{lem}\label{lemaD2proper}
$\mathcal{D}_{T_1,T_2} = \mathbb{P}^3$ if and only if $\text{Ax}^*(T_1) = \text{Ax}^*(T_2)$ is tangent to $S^2$. In particular $T_1,T_2$ are both parabolic with the same axis.
\end{lem}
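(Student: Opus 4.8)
The plan is to prove both implications, handling the ``if'' direction directly and the ``only if'' direction by ruling out, case by case, every configuration of $\text{Ax}^*(T_1),\text{Ax}^*(T_2)$ left open by Lemma \ref{codomainlema}.

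For the ``if'' direction, suppose $\text{Ax}^*(T_1)=\text{Ax}^*(T_2)=L$ with $L$ tangent to $S^2$; since a polar axis is tangent to $S^2$ exactly when the isometry is parabolic, this is the same as saying $T_1,T_2$ are parabolic with a common axis, which gives the ``in particular'' clause. We are then in case (1) of Lemma \ref{codomainlema}: for every $p$ outside the codomain, $P_{T_1}(p)$ and $P_{T_2}(p)$ are distinct planes both containing $L$, so $L_{T_1,T_2}(p)=L$, which is tangent to $S^2$, and hence $p\in\mathcal{D}_{T_1,T_2}$. The remaining points lie in the codomain and so belong to $\mathcal{D}_{T_1,T_2}$ by definition. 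Therefore $\mathcal{D}_{T_1,T_2}=\mathbb{P}^3$.

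For the converse I would argue by contraposition, using the elementary fact that a line $\ell$ fails to be tangent to $S^2$ as soon as it meets the open ball (then it is a genuine chord) or is disjoint from $\overline{\mathbb{H}}^3$. If $\text{Ax}^*(T_1)=\text{Ax}^*(T_2)=L$ but $L$ is not tangent, then $T_1,T_2$ are non-parabolic and $L$ is disjoint from $\overline{\mathbb{H}}^3$; by case (1) of Lemma \ref{codomainlema} we have $L_{T_1,T_2}(p)=L$ for every admissible $p$, a line that is not tangent, so $\mathcal{D}_{T_1,T_2}\neq\mathbb{P}^3$. It remains to treat cases (2)--(4), where $\text{Ax}^*(T_1)\neq\text{Ax}^*(T_2)$, and here I would produce a basepoint $p$ in the open ball for which $L_{T_1,T_2}(p)$ is a chord. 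Since $P_{T_i}(p)$ is the plane spanned by $\sqrt{T_i}(p)$ and $\text{Ax}^*(T_i)$, and $\sqrt{T_i}(p)$ lies in the open ball whenever $p$ does (as $\sqrt{T_i}$ is an isometry), it suffices to force $\sqrt{T_1}(p)\in P_{T_2}(p)$: then $\sqrt{T_1}(p)\in P_{T_1}(p)\cap P_{T_2}(p)=L_{T_1,T_2}(p)$, so this line contains an interior point and is a chord. The condition $\sqrt{T_1}(p)\in P_{T_2}(p)$ is the vanishing of the coplanarity determinant of $\sqrt{T_1}(p)$, $\sqrt{T_2}(p)$ and $\text{Ax}^*(T_2)$, a single degree-two equation in $p$, so its solution set $\Sigma$ is a quadric in $\mathbb{P}^3$.

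The main obstacle is to show that $\Sigma$ actually meets the open ball in cases (2)--(4), while it does not in case (1). The case (1) behaviour is a reassuring consistency check: there $\Sigma$ reduces to the locus $P_{T_1}(p)=P_{T_2}(p)$, that is the codomain, which consists of planes tangent to $S^2$ and so misses the open ball, matching the fact that $\mathcal{D}_{T_1,T_2}=\mathbb{P}^3$ occurs only in the tangent subcase. Thus the construction can succeed precisely when $\text{Ax}^*(T_1)\neq\text{Ax}^*(T_2)$. To verify that $\Sigma$ meets the open ball in cases (2)--(4), I would observe that the coplanarity condition is genuinely nontrivial (it is not satisfied identically once the two polar axes are distinct), so its zero quadric cannot be confined to the exterior of $\overline{\mathbb{H}}^3$; concretely, one chooses $p$ so that the interior points $\sqrt{T_1}(p)$ and $\sqrt{T_2}(p)$ span, together with $\text{Ax}^*(T_2)$, a plane passing through $\sqrt{T_1}(p)$. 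Once a single admissible such $p$ is found, $L_{T_1,T_2}(p)$ is a chord and hence not tangent to $S^2$, so $\mathcal{D}_{T_1,T_2}\neq\mathbb{P}^3$, completing the contrapositive. I expect the only delicate point to be the bookkeeping separating the three coplanar subcases of Lemma \ref{codomainlema}, where one must ensure the chosen $p$ avoids the codomain line $Q$ and the tangent plane $P$ so that $P_{T_1}(p)$ and $P_{T_2}(p)$ remain defined and distinct.
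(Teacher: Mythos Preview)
Your ``if'' direction and the non-tangent subcase of case (1) are correct and match the paper. The divergence is in how you handle cases (2)--(4) of Lemma \ref{codomainlema}: you try to produce an interior basepoint making $L_{T_1,T_2}(p)$ a \emph{chord}, whereas the paper produces an exterior basepoint making $L_{T_1,T_2}(p)$ disjoint from $S^2$. Either conclusion would show the line is not tangent, so both strategies are in principle valid.

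The problem is that your interior strategy is not actually carried out. You correctly reduce the condition $\sqrt{T_1}(p)\in P_{T_2}(p)$ to a quadric $\Sigma\subset\mathbb{P}^3$, but the assertion that ``its zero quadric cannot be confined to the exterior of $\overline{\mathbb{H}}^3$'' is false as stated: plenty of nontrivial quadrics in $\mathbb{P}^3$ miss the open unit ball entirely (a pair of exterior planes, a small ellipsoid outside the ball, etc.), so nontriviality alone gives you nothing. Your follow-up sentence (``concretely, one chooses $p$ so that \ldots a plane passing through $\sqrt{T_1}(p)$'') just restates the condition defining $\Sigma$ and is circular. You would need a genuine argument---for instance, an intermediate-value or continuity argument along a well-chosen path in the ball, or an explicit point on $\Sigma$ in the interior---and you have not supplied one. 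The ``bookkeeping'' you flag at the end (avoiding $Q$ or the tangent plane $P$) is not the hard part; the hard part is exhibiting any interior point of $\Sigma$ at all.

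The paper bypasses this by working on the exterior side: since the polar axes $\text{Ax}^*(T_i)$ lie outside the open ball, one can pick $p$ in the exterior of $S^2$ so that both planes $P_{T_i}(p)$ are exterior, and then their intersection line is automatically disjoint from $S^2$. This avoids having to locate a specific algebraic locus inside the ball.
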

\begin{proof}
From Lema [\ref{codomainlema}] we see that for cases (2), (3) and (4) there is a point $p\in\mathbb{P}^3$ such that both bisecting lines are exterior to $S^2$, then $L_{T_1,T_2}(p) $ is not tanget to $S^2$, so $\mathcal{D}_{T_1,T_2}$ is a proper algebraic subset. For case (1) clearly needs to be as described.
\end{proof}

From now on consider every pair of isometries of $\mathbb{H}^3$ NOT in case $(4)$ of Lemma \ref{codomainlema}.

\begin{lem}\label{lemaAproper}
$\mathcal{A}_{T_1,T_2,T_3} = \mathbb{P}^3$ only if $\text{Ax}^*(T_1),\text{Ax}^*(T_2),\text{Ax}^*(T_3)$ all coincide or if pairwise they are in cases (3) of Lema [\ref{codomainlema2}]. For case (3), $\text{Ax}^*(T_3) = Q$.
\end{lem}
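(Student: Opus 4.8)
The plan is to reduce the statement to a purely linear-algebraic condition on covectors and then classify. Since $\mathcal{A}_{T_1,T_2,T_3}$ is algebraic (Lemma \ref{Aalgebraic}) and, because we have excluded case $(4)$ of Lemma \ref{codomainlema}, the codomains $\text{codomain}(L_{T_1,T_2})$ and $\text{codomain}(L_{T_1,T_3})$ are unions of finitely many lines by Lemma \ref{codomainlema}, hence proper algebraic subsets, we get that $\mathcal{A}_{T_1,T_2,T_3}=\mathbb{P}^3$ holds if and only if $L_{T_1,T_2}(p)=L_{T_1,T_3}(p)$ on a Zariski-dense set, i.e.\ identically as rational maps. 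Both lines lie in the plane $P_{T_1}(p)$, so their coincidence is equivalent to the three planes $P_{T_1}(p),P_{T_2}(p),P_{T_3}(p)$ sharing a common line for every $p$ where they are defined. Writing each plane as a covector $\pi_i(p)$ in the pencil $V_i\subset(\mathbb{R}^4)^*$ of planes through $\text{Ax}^*(T_i)$ — so that $\pi_i$ is the projection away from $\text{Ax}^*(T_i)$ precomposed with $\sqrt{T_i}$, a rank-$2$ linear map with kernel $\text{Ax}^*(T_i)$ — this common-line condition becomes $\pi_1\wedge\pi_2\wedge\pi_3\equiv 0$, that is, $\pi_1(p),\pi_2(p),\pi_3(p)$ are linearly dependent for all $p$.

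First I would dispose of the low-rank case and rule out skew pairs. If the generic rank of $\{\pi_1(p),\pi_2(p),\pi_3(p)\}$ is at most $1$, then $\pi_1(p)$ and $\pi_2(p)$ are proportional for all $p$; since each $\pi_i$ surjects onto $\mathbb{P}(V_i)$, this forces $V_1=V_2$ (else the common map would be constant), and iterating gives $V_1=V_2=V_3$, so all three polar axes coincide. Now assume the generic rank is exactly $2$ and that some pair, say $\text{Ax}^*(T_1),\text{Ax}^*(T_2)$, is skew (case $(2)$, i.e.\ $V_1\cap V_2=0$). Then $V_1\oplus V_2=(\mathbb{R}^4)^*$ and $\pi_1\oplus\pi_2\colon\mathbb{R}^4\to V_1\oplus V_2$ has kernel $\text{Ax}^*(T_1)\cap\text{Ax}^*(T_2)=0$, hence is an isomorphism. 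Reparametrizing by $(u_1,u_2)=(\pi_1(p),\pi_2(p))$, the dependency reads $\pi_3\in\langle u_1,u_2\rangle$ for all $(u_1,u_2)\in V_1\times V_2$; setting $u_2=0$ shows $\pi_3$ is scalar on $V_1$ with image in $V_3$, forcing $V_3=V_1$ or the scalar to vanish, and symmetrically on $V_2$. This collapses to $P_{T_3}\equiv P_{T_1}$ (or $P_{T_3}\equiv P_{T_2}$), which is incompatible with $T_1,T_3$ being distinct non-elliptic isometries, the only surviving possibility being a coincidence of axes already covered. Hence no pair is skew: every pair of polar axes is coplanar, i.e.\ in case $(1)$ or $(3)$.

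Then I would analyze the coplanar configuration, using that three pairwise-coplanar lines are either concurrent at a point or all contained in one plane (or both). The purely-coplanar, non-concurrent case is excluded: there $V_1\cap V_2\cap V_3=\langle\Pi\rangle$ with $\dim(V_1+V_2+V_3)=4$, so in a basis $\Pi,m_1,m_2,m_3$ with $m_i\in V_i$ and $\pi_i=a_i\,\Pi+b_i\,m_i$, the coefficient of $m_1\wedge m_2\wedge m_3$ in $\pi_1\wedge\pi_2\wedge\pi_3$ is the nonzero cubic $b_1b_2b_3$, contradicting $\pi_1\wedge\pi_2\wedge\pi_3\equiv 0$. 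Therefore the three axes are concurrent at a point $O$, all $\pi_i$ take values in the $3$-dimensional space $O^\perp$, and the dependency reduces to the vanishing of the single cubic $\det(\pi_1,\pi_2,\pi_3)$ computed in $O^\perp$. Carrying out this determinant, with the kernels $\text{Ax}^*(T_i)$ and the half-turns $\sqrt{T_i}$, shows it vanishes identically exactly when $\text{Ax}^*(T_3)$ is the line $Q$ coplanar with $\text{Ax}^*(T_1),\text{Ax}^*(T_2)$ through their intersection; this is the pairwise case $(3)$ alternative with $\text{Ax}^*(T_3)=Q$, completing the classification.

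The main obstacle I anticipate is this final step: extracting from the identically vanishing cubic $\det(\pi_1,\pi_2,\pi_3)$ that the third polar axis must be precisely $Q$, rather than merely concurrent with the other two, which requires unwinding the maps $\sqrt{T_i}$ and matching them against the description of $Q$ in Lemma \ref{codomainlema}. A secondary technical point is the bookkeeping in the skew case, to be certain that $P_{T_3}\equiv P_{T_1}$ genuinely forces an already-accounted coincidence of axes (with careful handling of the elliptic and involution exclusions), and, throughout, the passage between real and complex Zariski density when asserting that $\mathcal{A}_{T_1,T_2,T_3}=\mathbb{P}^3$ is equivalent to the identical vanishing of the relevant forms.
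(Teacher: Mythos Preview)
Your approach is a plausible strategy in principle, but it is far more work than necessary and you leave it incomplete: you explicitly flag the identification $\text{Ax}^*(T_3)=Q$ as an unresolved obstacle, and the skew-case bookkeeping is also left tentative. The paper dispatches the lemma in a single sentence, by a route you set up but then abandon.

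You already establish in your first paragraph that $\mathcal{A}_{T_1,T_2,T_3}=\mathbb{P}^3$ is equivalent to $L_{T_1,T_2}=L_{T_1,T_3}$ identically as rational maps. The paper's observation is simply that then the extended maps $\widehat{L}_{T_1,T_2}$ and $\widehat{L}_{T_1,T_3}$ agree as well, and in particular their indeterminacy loci---the ``codomains'' already classified in Lemma~\ref{codomainlema2}---must coincide. That lemma tabulates these loci as $\emptyset$ (case~(1)), $\text{Ax}^*(T_1)\cup\text{Ax}^*(T_j)$ (case~(2)), or $\text{Ax}^*(T_1)\cup\text{Ax}^*(T_j)\cup Q$ (case~(3)); matching the locus for the pair $(T_1,T_2)$ against that for $(T_1,T_3)$ gives the stated alternatives, and $\text{Ax}^*(T_3)=Q$ is read off directly from the union of lines rather than extracted from a cubic determinant.

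Your wedge-product analysis $\pi_1\wedge\pi_2\wedge\pi_3\equiv 0$ is in effect re-deriving the content of Lemma~\ref{codomainlema2} from scratch instead of quoting it, which is why the skew and coplanar cases become laborious and why the $Q$-identification looks hard to you. The upside of your route, if completed, would be a self-contained linear-algebraic argument independent of the preceding lemma; the cost is the unfinished determinant and the delicate handling of $P_{T_3}\equiv P_{T_1}$. If you want a complete proof with minimal effort, simply invoke the indeterminacy-locus comparison after your first paragraph.
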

\begin{proof}
Indeed, the codomains of $\widehat{L}_{T_1,T_2}, \widehat{L}_{T_1,T_3}$ need to coincide since the algebraic functions are equal in a open set.
\end{proof}

\begin{lem}\label{lemaD3proper}
$\mathcal{D}_{T_1,T_2,T_3} = \mathbb{P}^3$ only if $\text{Ax}^*(T_1),\text{Ax}^*(T_2),\text{Ax}^*(T_3)$ coincide or if pairwise they are in cases (3) of Lema [\ref{codomainlema2}]. For case (3), $\text{Ax}^*(T_3) = Q$.
\end{lem}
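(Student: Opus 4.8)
The plan is to reduce Lemma \ref{lemaD3proper} to the already-established Lemma \ref{lemaAproper} together with a genericity argument about tangency to $S^2$. The key observation is that $\mathcal{D}_{T_1,T_2,T_3}$ is built from $\mathcal{A}_{T_1,T_2,T_3}$ by adjoining the locus where the intersection point $L_{T_1,T_2}(p)\cap L_{T_1,T_3}(p)$ lands on $S^2$. So if $\mathcal{D}_{T_1,T_2,T_3}=\mathbb{P}^3$, then for every $p$ outside $\mathcal{A}_{T_1,T_2,T_3}$ the two bisecting lines meet on $S^2$. Either $\mathcal{A}_{T_1,T_2,T_3}=\mathbb{P}^3$ already—in which case Lemma \ref{lemaAproper} hands us exactly the stated conclusion (the three polar axes coincide, or pairwise they sit in case (3) with $\text{Ax}^*(T_3)=Q$)—or $\mathcal{A}_{T_1,T_2,T_3}$ is a proper algebraic subset and we must derive a contradiction from the tangency condition holding on its complement.

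So the heart of the argument is the second alternative. First I would fix a generic $p\notin\mathcal{A}_{T_1,T_2,T_3}$, so that $L_{T_1,T_2}(p)$ and $L_{T_1,T_3}(p)$ are genuinely distinct lines that, by hypothesis, intersect at a point lying on $S^2$. Because $P_{T_1}(p)$ is common to both bisecting planes, all three planes $P_{T_1}(p), P_{T_2}(p), P_{T_3}(p)$ pass through this common boundary point; equivalently, the three points $\sqrt{T_i}(p)$ and the three polar axes are configured so that the triple intersection forced onto $S^2$. The strategy is then to vary $p$ in a one-parameter family transverse to the relevant axes and track the intersection point: since the tangency-to-$S^2$ locus is cut out by a single polynomial (the defining form of $S^2$ evaluated at the intersection), demanding it vanish identically on the open set $\mathbb{P}^3\setminus\mathcal{A}_{T_1,T_2,T_3}$ forces the polynomial to vanish identically on all of $\mathbb{P}^3$. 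I would then exhibit an explicit test point—for instance a point $p$ deep inside $\mathbb{H}^3$ where, by the construction of $P_{T,p}$, the bisecting planes separate $p$ from its images and hence the bisecting line is exterior to $S^2$ (compare the proof of Lemma \ref{lemaD2proper})—to contradict universal tangency, unless the axes are in the degenerate configurations of Lemma \ref{lemaAproper}.

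The main obstacle I anticipate is handling the coplanar-but-nondegenerate situation cleanly. When $\text{Ax}^*(T_1),\text{Ax}^*(T_2),\text{Ax}^*(T_3)$ are pairwise coplanar (each pair in case (3)) but do not all coincide, the three polar axes and the extra line $Q$ interact, and I must verify that requiring every pairwise bisecting-line intersection to lie on $S^2$ genuinely forces $\text{Ax}^*(T_3)=Q$ rather than merely some weaker coincidence. This is where the ``only if'' direction is delicate: I expect to need the same argument used in Lemma \ref{lemaAproper}, namely that two algebraic maps agreeing (here, the tangency condition forcing coincidence of the relevant $\widehat{L}$-codomains) on a Zariski-open set must agree everywhere, and then read off the permitted axis configurations from Lemma \ref{codomainlema2}. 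Since the statement is only ``only if,'' I do not need to prove these configurations actually produce $\mathcal{D}_{T_1,T_2,T_3}=\mathbb{P}^3$; I only need that no other configuration can, which makes the algebraic-agreement argument of Lemma \ref{lemaAproper} essentially sufficient once the tangency locus has been shown to collapse onto $\mathcal{A}_{T_1,T_2,T_3}$.
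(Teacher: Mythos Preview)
Your overall strategy matches the paper's: reduce to Lemma~\ref{lemaAproper}, and when $\mathcal{A}_{T_1,T_2,T_3}\neq\mathbb{P}^3$ exhibit a single basepoint $p$ for which $L_{T_1,T_2}(p)\cap L_{T_1,T_3}(p)\notin S^2$. The paper's entire argument is one line: take $p$ generic \emph{outside} the closed unit ball; then the intersection point also lies outside the ball.

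Your proposed test point, however, does not work. For $p\in\mathbb{H}^3$ the plane $P_{T_i,p}$ is the equidistant hyperplane between $p$ and $T_i(p)$, so it certainly meets $\mathbb{H}^3$; the intersection $L_{T_1,T_2}(p)=P_{T_1}(p)\cap P_{T_2}(p)$ is then a hyperbolic geodesic and hits $S^2$ in two points, not zero. So ``$p$ deep inside $\mathbb{H}^3$ forces the bisecting line exterior to $S^2$'' is false. The correct move (and the one the paper makes) is the opposite: pick $p$ in the exterior region of $\mathbb{P}^3$, where the planes $P_{T_i}(p)$ still contain the polar axes $\text{Ax}^*(T_i)$ (which lie outside the ball) and a generic triple of such planes meets at a point outside $\overline{\mathbb{H}^3}$.

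Finally, your ``main obstacle'' paragraph is unnecessary. Once you have shown that $\mathcal{A}_{T_1,T_2,T_3}\neq\mathbb{P}^3$ implies $\mathcal{D}_{T_1,T_2,T_3}\neq\mathbb{P}^3$, the contrapositive gives $\mathcal{D}_{T_1,T_2,T_3}=\mathbb{P}^3\Rightarrow\mathcal{A}_{T_1,T_2,T_3}=\mathbb{P}^3$, and Lemma~\ref{lemaAproper} already delivers the full conclusion including $\text{Ax}^*(T_3)=Q$ in case~(3). There is no separate delicate analysis to perform.
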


\begin{proof}
Indeed, in the cases where $\mathcal{A}_{T_1,T_2,T_3} \neq \mathbb{P}^3$ and with generic basepoint outside the unit ball, the intersection point lies also outside the unit ball.
\end{proof}

\begin{defi}
Let $M$ be a geometrically finite hyperbolic $3$-manifold. We say that a Dirichlet fundamental domain $F_p$ with center $p$ is peripherally generic if its intersection with $S^2$ is a union of generic cuspid polygons (disregard the rank-$2$ cusps). Here generic means that two consecutive edges intersect transversally unless they join at a rank-$1$ cusp, and a vertex is not shared by more than $2$ edges.
\end{defi}

\begin{prop}\label{propgeneric}
Let $M$ be a geometrically finite hyperbolic $3$-manifold. Then for a dense set of $p\in\mathbb{H}^3$ the fundamental Dirichelet domain $F_p$ is peripherally generic.
\end{prop}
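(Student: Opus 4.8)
The plan is to establish peripheral genericity by showing that the bad basepoints form a measure-zero (indeed, proper algebraic) subset of $\mathbb{H}^3$, so that the good set is dense. The essential structure is a finiteness-plus-algebraic argument: a geometrically finite group $\Gamma = \pi_1(M)$ is finitely generated, and for a fixed basepoint $p$ the Dirichlet domain $F_p$ is cut out by the bisecting planes $P_{T,p}$ only for those finitely many $T \in \Gamma$ whose corresponding bisector actually contributes a face (or an edge/vertex on $S^2$). First I would make precise that the peripheral genericity conditions decompose into finitely many local requirements on triples (and pairs) of contributing isometries: no two bisecting planes $P_{T_1}(p), P_{T_2}(p)$ are tangent to $S^2$ along their intersection line (controlled by $\mathcal{D}_{T_1,T_2}$), and no three bisecting planes meet in a common line or meet $S^2$ in a common point (controlled by $\mathcal{A}_{T_1,T_2,T_3}$ and $\mathcal{D}_{T_1,T_2,T_3}$). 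The set of bad basepoints is then contained in the union, over the relevant finite collection of pairs and triples, of the algebraic sets $\mathcal{D}_{T_1,T_2}$, $\mathcal{A}_{T_1,T_2,T_3}$, and $\mathcal{D}_{T_1,T_2,T_3}$.

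The next step is to invoke Lemmas \ref{Aalgebraic} and \ref{Dalgebraic}, which guarantee these sets are algebraic, together with the properness criteria of Lemmas \ref{lemaD2proper}, \ref{lemaAproper}, and \ref{lemaD3proper}. The point is that each such set is a \emph{proper} algebraic subset of $\mathbb{P}^3$ — hence has empty interior and measure zero — unless the defining isometries lie in the degenerate configurations those lemmas single out (coincident polar axes, or the case (3) alignment with $\text{Ax}^*(T_3)=Q$). The crux of the argument is therefore to check that these exceptional configurations are exactly the ones we are instructed to \emph{disregard} in the definition of peripheral genericity: the caveat ``except when we refer to hyperplanes of the same abelian subgroup'' is precisely what removes the coincident-axis degeneracies, since isometries sharing a polar axis generate an abelian (parabolic or loxodromic) subgroup fixing a common point at infinity, and their bisectors near that fixed point are treated together as part of a single rank-$1$ cusp rather than as independent generic faces.

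With that reconciliation in place, the conclusion is a finite-union argument: for each pair and triple of \emph{distinct abelian classes} of contributing isometries, the corresponding $\mathcal{D}$ or $\mathcal{A}$ set is a proper algebraic subset, so its complement is open and dense; since $F_p$ involves only finitely many faces (geometric finiteness gives a finite-sided Dirichlet domain), the collection of pairs and triples to consider is finite, and a finite intersection of open dense sets is open and dense. Thus for a dense set of $p \in \mathbb{H}^3$ the bisecting configuration at $S^2$ avoids all tangencies and triple incidences, which is exactly the statement that $F_p \cap S^2$ is a union of generic cuspidal polygons.

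I expect the main obstacle to be the bookkeeping that makes ``finitely many contributing isometries'' both correct and uniform: a priori the set of $T$ whose bisector meets $S^2$ in a boundary edge of $F_p$ could vary with $p$, so one must argue that as $p$ ranges over a small ball the relevant finite face-set stabilizes (using that faces persist under small perturbation of the center, as in the Dehn-filling stability philosophy of \cite{Thurston5} and the arguments of \cite{JorgensenMarden}, \cite{DiazUshijima}). A secondary subtlety is confirming that the parabolic (case (4)) pairs, excluded from the running hypotheses just before Lemma \ref{lemaAproper}, indeed only arise among isometries of a common rank-$1$ cusp subgroup and are thus safely quarantined by the ``same abelian subgroup'' exception; handling this carefully is what lets us avoid the genuinely degenerate $\mathcal{D}_{T_1,T_2}=\mathbb{P}^3$ situation of Lemma \ref{lemaD2proper}.
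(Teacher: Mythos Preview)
Your overall strategy matches the paper's: reduce to showing that the exceptional sets $\mathcal{D}_{T_1,T_2}$ and $\mathcal{D}_{T_1,T_2,T_3}$ are proper algebraic subsets once the same-abelian-subgroup configurations are set aside, and then pass to complements. Your identification of the excluded configurations with the ``same abelian subgroup'' exception is also what the paper does (it observes that elements of $\Gamma$ sharing a fixed point at infinity form an abelian subgroup by discreteness, which disposes of case~(4) of Lemma~\ref{codomainlema}, and that case~(3) of Lemma~\ref{lemaD3proper} forces an elliptic element, which cannot occur in $\Gamma$).

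The one substantive divergence is your reduction to \emph{finitely} many pairs and triples. The paper does not do this: it takes the union $\mathcal{D}$ of the sets $\mathcal{D}_{T_1,T_2}$ and $\mathcal{D}_{T_1,T_2,T_3}$ over \emph{all} admissible pairs and triples in $\Gamma$---a countable collection---and invokes Baire's theorem, since each member is a proper algebraic subset and hence nowhere dense. This completely sidesteps the obstacle you correctly anticipate, namely making the finite face-set uniform as $p$ varies. Your version can probably be pushed through (the combinatorial type of $F_p$ is locally constant off a lower-dimensional set, so one can localize), but it is extra work for no gain; Baire over the full countable family is both shorter and is the device used in \cite{JorgensenMarden} and \cite{DiazUshijima}. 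The paper then closes with a brief case check---that any residual tangency or triple incidence at a point $p\notin\mathcal{D}$ forces the offending isometries into a common abelian (cusp) subgroup, hence is allowed by the definition of peripherally generic---which you gesture at but should carry out explicitly.
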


\begin{proof}
Denote by $\Gamma = \pi_1(M)<PSL(2,\mathbb{C})$. Define $\mathcal{D}$ as the union of all sets $\mathcal{D}_{T_1,T_2}$ for all pairs $T_1,T_2 \in \Gamma$ except when they are parabolic elements with the same fixed point, and all sets $\mathcal{D}_{T_1,T_2,T_3}$ for triples $T_1,T_2,T_3 \in \Gamma$, except when all three belong to the same abelian subgroup of $\Gamma$. Consider $p\notin\mathcal{D}$. The proof (similar to \cite{JorgensenMarden} \cite{DiazUshijima}) divides into showing that $\mathcal{D}_{T_1,T_2},\mathcal{D}_{T_1,T_2,T_3}$ are proper algebraic subsets (for the cases considered), and that this suffices for the fundamental polyhedron to be peripherally generic. The statement of the proposition follows from Baire's theorem and this two facts.

For $\mathcal{D}_{T_1,T_2}$ sets, we see clearly from Lema \ref{lemaD2proper} that this set is proper. For $\mathcal{D}_{T_1,T_2,T_3}$ sets, observes first that the isometries of $\Gamma$ with a common fix point form an abelian subgroup (because $\Gamma$ is discrete). Then the case $(4)$ of Lemma \ref{codomainlema} does not occur in our considerations. According to Lema \ref{lemaD3proper}, we need to discard cases $(3)$. For case (3) $T_2^2$ will fix two planes that are not the tangents from $Ax^*(T_2)$, implying that $T_2$ is elliptic.

Remains to show that for a point $p\notin\mathcal{D}$, $F_p$ is peripherally generic. Assume the contrary. Then $F_p$ has a polygonal face $\Pi$ in $S^2$ that is not a generic cuspid polygon. By the definitions of $\mathcal{D}_{T_1,T_2}, \mathcal{D}_{T_1,T_2,T_3}$ and $p\notin\mathcal{D}$, non-generic edge intersections ain $\Pi$ are not from the cases considered in $\mathcal{D}$. Let us look into each case.

\begin{itemize}
	\item Two consecutives edges of $\Pi$ are tangent: then the corresponding elements $T_1, T_2\in\Gamma$ associated to the edges must be parabolic with the same dual axis. Since they are appearing consecutively at a fundamental domain, they correspond to a rank-$1$ cusp, which is accounted in the definition of peripheral generic.
	\item A vertex of $\Pi$ is shared by three edges: then the corresponding elements $T_1.T_2,T_3\in\Gamma$ associated to the edges must belong to the same abelian subgroup. The subgroup cannot be loxodromic since in that case all the bisecting planes intersect at the common dual axis, which lies outside $S^2$. In the case that the subgroup is parabolic, the vertex will be then the common fix point. There can't be three edges for a rank-$1$ cusp and rank-$2$ cusp dixed point do not appear in $F_p$.
	
\end{itemize}

Then $F_p$ is peripherally generic, which is the last part of the proof.
\end{proof}

\section{Small deformations of sequences}\label{sec:smalldef}

As in \cite{Thurston5}, let us understand a hyperbolic $3$-manifold $M$ as a $({\rm PSL}(2,\mathbb{C}),\mathbb{H}^3)$ manifold, with associated holonomy $H:\pi_1(M) \rightarrow {\rm PSL}(2,\mathbb{C})$. For $M$ with finitely generated fundamental group (i.e. the interior of a compact $3$ manifold, thanks to the Scott/Shalen compact core \cite{Scott73}) the representation variety $\text{Def}(M) = Hom(\pi_1(M), {\rm PSL}(2,\mathbb{C}))/{\rm PSL}(2,\mathbb{C})$ is a finite dimensional complex algebraic variety.

\begin{defi}
Let $M$ be a geometrically finite hyperbolic $3$-manifold. Then define
\[
\text{Def}_0(M):= \{ H_0\in\text{Def}(M)|\text{ if }\gamma\text{ is conjugated to a rank-1 cusp},  H_0(\gamma) \text{ is parabolic} \}
\]
\end{defi}

As in Theorem 5.6 of \cite{Thurston5}, we have:

\begin{prop} The dimension of $\text{Def}_0(M)$ (near $H$) is as great as the total dimension of the Teichm\"uller space of $\partial(M)$, that is, 
\[{\rm dim}_\mathbb{C}(\text{Def}_0(M)) \geq \sum_{\chi(\partial M)_i < 0} 3|\chi(\partial M)_i| + (\text{number of rank-2 cusps})
\]
\end{prop}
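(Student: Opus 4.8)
The plan is to realize this as a dimension count for the variety $\text{Def}_0(M)$ cut out inside the full representation variety $\text{Def}(M)$ by the parabolicity conditions, following the Dehn-surgery/deformation philosophy of Thurston's Theorem 5.6. First I would recall that for a geometrically finite hyperbolic structure the holonomy $H$ is a smooth point of $\text{Def}(M)$ whose local dimension is governed by $\dim_{\mathbb{C}} H^1(\pi_1(M);\mathfrak{sl}_2\mathbb{C})$ (with $\pi_1$ acting via the adjoint of $H$), and that by a half-lives-half-dies duality argument this cohomology is computed from the boundary. Concretely, the restriction map to the boundary subgroups has image of half the dimension of $\bigoplus_i H^1(\pi_1(S_i);\mathfrak{sl}_2\mathbb{C})$, so each boundary surface component $S_i$ of negative Euler characteristic contributes $3|\chi(S_i)|$ complex parameters (these are exactly the Teichm\"uller parameters of the quasi-Fuchsian deformation, matching $\dim_{\mathbb{C}}\mathcal{T}(S_i)=3|\chi(S_i)|$), and each rank-$2$ cusp contributes one further complex parameter corresponding to the Dehn-filling/cusp-shape deformation.

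Next I would set up the parabolicity constraints carefully. For each rank-$1$ cusp one imposes that the holonomy of the associated peripheral element $\gamma$ remains parabolic; the key point is that each such condition is cut out by a single complex-analytic equation (the trace condition $\text{tr}\,H_0(\gamma)=\pm 2$) on $\text{Def}(M)$, so it drops the dimension by at most one. The natural way to phrase this is that the rank-$1$ cusps already sit inside the quasi-Fuchsian Teichm\"uller count: pinching a peripheral curve is a codimension-preserving operation in the sense that the parabolic locus is the boundary stratum of the Teichm\"uller factor, not an extra cutting-down of it. Thus, after accounting for the fact that the conditions are non-generic in a controlled way, the inequality goes in the right direction and I would obtain the lower bound $\dim_{\mathbb{C}}(\text{Def}_0(M)) \geq \sum_{\chi(\partial M)_i<0} 3|\chi(\partial M)_i| + (\text{number of rank-2 cusps})$, with the rank-$2$ cusps surviving because their deformation parameter is not killed by any parabolicity equation.

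The cleanest route, given that the statement is phrased as a lower bound, is to exhibit explicitly a family of deformations of the required dimension rather than compute the cohomology from scratch. I would build these from the Ahlfors--Bers parametrization: quasiconformal deformations of the conformal boundary give the $\sum 3|\chi_i|$ parameters of $\mathcal{T}(\partial M)$, and each rank-$2$ cusp admits an independent deformation of its cusp shape (equivalently, a deformation of the associated $\mathbb{Z}^2$ parabolic subgroup within its one-complex-parameter family of shapes), all of which preserve the rank-$1$ parabolicity by construction. Showing these deformations are genuinely independent in $\text{Def}_0(M)$ is what pins down the lower bound.

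The main obstacle I anticipate is verifying that the rank-$1$ parabolicity conditions are each only codimension one and are not accidentally redundant with or implied by the other constraints, so that imposing them does not over-cut the dimension. This is exactly the delicate point in Thurston's Theorem 5.6, handled there through a careful count of how the trace conditions interact with the Mostow/local rigidity structure and the Poincar\'e-duality pairing on the boundary tori and surfaces. I would lean on the half-lives-half-dies lemma to guarantee that the peripheral constraints are independent in cohomology, and I expect that managing the bookkeeping between the surface boundary components carrying the rank-$1$ cusps and the torus boundary components carrying the rank-$2$ cusps — ensuring no double-counting — will be the technically fussiest step.
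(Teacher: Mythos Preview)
The paper does not actually give a proof of this proposition: it simply prefaces the statement with ``As in Theorem 5.6 of \cite{Thurston5}, we have'' and then states the bound without argument. So there is nothing to compare your proposal against except Thurston's original proof.

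Your sketch is in the right spirit and would eventually reach the result, but it mixes two distinct arguments in a way that makes the logic harder than necessary. Thurston's actual Theorem~5.6 argument is purely algebro-geometric: one shows $\text{Def}(M)$ has dimension at least a certain number by counting generators and relators (or via an Euler-characteristic computation on the variety), and then observes that each parabolicity constraint $\mathrm{tr}^2 H_0(\gamma)=4$ is a single analytic equation, hence cuts down the dimension by at most one. The cohomological ``half-lives-half-dies'' picture you describe is correct and illuminating, but it computes the \emph{tangent space} at a smooth point rather than giving a lower bound on the variety dimension directly; to turn it into the stated inequality you would still need to know $H$ is a smooth point, which for a geometrically finite representation is true but is an additional input. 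Your ``cleanest route'' via Ahlfors--Bers plus cusp-shape deformations is also fine in principle, but note that the cusp-shape deformations of rank-$2$ cusps leave the geometrically finite locus, so verifying their independence from the Teichm\"uller directions inside $\text{Def}_0(M)$ is exactly the nontrivial step (this is what the paper later uses in the proof of Theorem~\ref{Dehnfill}, where the map $T$ of \eqref{Tmap} is shown to be open). Either path works; Thurston's equation-counting argument is the most direct for a bare lower bound.
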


\begin{defi}(Dehn-filling notation)
Let $M$ be a $3$-manifold with $k$ tori boundary components. For fixed $\alpha_i, \beta_i$ meridian and longitude of the $i$-tori component, and $(d_1,\ldots,d_k)\in S^2\times\ldots\times S^2$ ($S^2= \mathbb{R}^2\cup\{\infty\}$), denote by $M_{(d_1,\ldots,d_k)}$ as the result of gluing disks along the curves $a_i\alpha_i + b_i\beta_i$ ($d_i= a_i + b_i$) and then filling by $3$-balls. In case $d_i=\infty$, we do not perform any filling at the $i$-tori component.
\end{defi}

Following \cite{Thurston5},\cite{BonahonOtal88} let us show:

\begin{theorem}[Generalized Dehn-filling]\label{Dehnfill} Let $M$ be a geometrically finite hyperbolic $3$-manifold. Fix $\alpha_i,\beta_i$ meridians and longitudes  for the tori components of $\partial M$, and denote by $\tau\in\mathcal{T}(\partial M)$ the conformal class at infinity. Then exists a neighbourhood $U$ of $(\tau,\infty,\ldots,\infty)$ in $\mathcal{T}(\partial M)\times S^2\times\ldots \times S^2$ such that for $(\tau_0,d_1,\ldots,d_k)\in U$ there exists a hyperbolic structure in $M_{(d_1,\ldots,d_k)}$ with conformal class at infinity $\tau_0$, which we will call by $M_{(\tau_0,d_1,\ldots,d_k)}$. Moreover, as $(\tau_0,d_1,\ldots,d_k) \rightarrow (\tau,\infty,\ldots,\infty)$, $M_{(\tau_0,d_1,\ldots,d_k)}$ converges geometrically to $M$.

\end{theorem}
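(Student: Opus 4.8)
The plan is to realize Theorem \ref{Dehnfill} as a two-stage deformation argument built on the dimension count from the preceding proposition. First I would set up the deformation space $\text{Def}(M)$ and the subvariety $\text{Def}_0(M)$ cut out by the parabolicity conditions on the rank-$1$ cusps, and use the fact that the holonomy of a geometrically finite structure is a smooth point of $\text{Def}_0(M)$ with the dimension bounded below by $\sum 3|\chi(\partial M)_i| + (\text{number of rank-2 cusps})$. The key is that this is precisely the dimension of the parameter space $\mathcal{T}(\partial M)\times S^2\times\ldots\times S^2$ near $(\tau,\infty,\ldots,\infty)$, where each rank-$2$ cusp contributes one $S^2$ factor (the Dehn-filling coefficient) and the Teichm\"uller directions account for the deformations of the conformal boundary. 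So the natural approach is to produce a map from a neighborhood of $H$ in $\text{Def}_0(M)$ to this parameter space and invoke the inverse (or implicit) function theorem.

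Second, I would construct the relevant coordinate functions on $\text{Def}_0(M)$. For each rank-$2$ cusp $i$, fix the meridian-longitude pair $\alpha_i,\beta_i$; the holonomy of a nearby representation sends $\pi_1$ of the corresponding torus to a group that, as in \cite{Thurston5}, is either parabolic (the complete cusp, coordinate $\infty$) or loxodromic with a short core geodesic. Following Thurston's analysis of the geometry of Dehn surgery, I would record the \emph{generalized Dehn surgery coefficient} $d_i = a_i + b_i$ determined by requiring that the element $a_i\alpha_i + b_i\beta_i$ have trivial (rotation plus translation) holonomy, together with the conformal class $\tau_0 \in \mathcal{T}(\partial M)$ read off from the quasiconformal boundary data of the surface components. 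This gives a map $\Phi : \text{Def}_0(M) \supseteq V \to \mathcal{T}(\partial M)\times S^2\times\ldots\times S^2$ defined near $H$, with $\Phi(H) = (\tau,\infty,\ldots,\infty)$.

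The main content is then to check that $\Phi$ is a local homeomorphism at $H$. By the dimension estimate of the preceding proposition the source and target have the same dimension, so it suffices to verify that $\Phi$ is a local injection (equivalently that the Dehn-surgery and Teichm\"uller coordinates are independent and fill out the tangent space). Here I would lean on the rigidity input: the map to $\mathcal{T}(\partial M)$ on the quasi-Fuchsian/convex-cocompact directions is exactly the Ahlfors-Bers parametrization, which is a local diffeomorphism, while the rank-$2$ cusp coordinates near $\infty$ are governed by Thurston's hyperbolic Dehn surgery, where the surgery coefficient is a local analytic coordinate on the trace of the cusp holonomy. Combining these, $\Phi$ is a local homeomorphism, so for $(\tau_0,d_1,\ldots,d_k)$ in a small neighborhood $U$ there is a representation $H_0 = \Phi^{-1}(\tau_0,d_1,\ldots,d_k)$. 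For $d_i$ finite and near $\infty$ this representation is discrete and faithful and, by the standard argument that the incomplete structure completes to a closed geodesic, yields a genuine hyperbolic structure on $M_{(d_1,\ldots,d_k)}$ with conformal boundary $\tau_0$.

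Finally, geometric convergence $M_{(\tau_0,d_1,\ldots,d_k)} \to M$ as $(\tau_0,d_1,\ldots,d_k)\to(\tau,\infty,\ldots,\infty)$ follows because $\Phi^{-1}$ is continuous: the holonomy representations converge algebraically to $H$, and since the filled manifolds are obtained by adding longer and longer Margulis tubes whose cores escape to the cusps, algebraic convergence upgrades to geometric convergence (as in \cite{Thurston5}, \cite{BonahonOtal88}). The hard part, and the step I expect to require the most care, is confirming the independence of the cusp-filling coordinates from the Teichm\"uller coordinates so that $\Phi$ is genuinely a local homeomorphism rather than merely a surjection onto a set of the right dimension; this is where one must rule out collapses between distinct deformation directions and invoke the local injectivity built into the Ahlfors-Bers theorem and Thurston's surgery parametrization.
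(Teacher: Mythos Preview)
Your overall architecture matches the paper's: build a map from a neighborhood of $H$ in $\text{Def}_0(M)$ to $\mathcal{T}(\partial M)\times(\text{cusp coordinates})$, use the dimension count of the preceding proposition to see that source and target have equal dimension, and conclude openness/local invertibility, then finish with Thurston's Dehn surgery argument and algebraic-to-geometric convergence.

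There is, however, a real gap in your construction of the Teichm\"uller coordinate. You write that $\tau_0$ is ``read off from the quasiconformal boundary data of the surface components,'' and later invoke Ahlfors--Bers to certify that this coordinate is a local diffeomorphism. But both of these only make sense once you already know the representation $H_0$ is discrete, faithful, and geometrically finite --- precisely what you are trying to establish. For a generic $H_0\in\text{Def}_0(M)$ near $H$ there is no domain of discontinuity available a priori, so there is no quasiconformal boundary to read off and Ahlfors--Bers does not apply. Without an intrinsic, holomorphic definition of $\tau(H_0)$ on the representation variety, your map $\Phi$ is not defined on a full neighborhood of $H$, and the local-injectivity argument collapses.

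The paper's fix is the content of Section~\ref{sec:perigeneric}: choose a \emph{peripherally generic} Dirichlet domain $F_p$ (Proposition~\ref{propgeneric}), so that for $H_0$ close to $H$ the same finite set of group elements still cuts out a generic cuspid polygon on $S^2$. Developing that polygon gives a projective structure on each boundary component, hence an underlying conformal class $\tau(H_0)\in\mathcal{T}(\partial M)$, defined purely from the holonomy and varying holomorphically. With this in hand the paper builds $T(H_0)=(\tau(H_0),\operatorname{Tr}H_0(\alpha_1)^2,\ldots,\operatorname{Tr}H_0(\alpha_k)^2)$, observes $T^{-1}(\tau,4,\ldots,4)=\{H\}$ (the fiber is a single point because parabolicity on all $\alpha_i$ plus fixed conformal boundary pins down the structure), and deduces that $T$ is open. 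Your proposal should either incorporate this polygon-development step, or supply some other mechanism for assigning a Teichm\"uller point to an arbitrary nearby representation before discreteness is known.
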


\begin{proof}
Thanks to Proposition \ref{propgeneric} take a fundamental domain $F_p$ that is peripherally generic. Then for elements $H_0\in\text{Def}_0(M)$ sufficiently close to $H$ (the holonomy of $M$), the elements that shape the cusped polygons for $F_p$ also shape generic cuspid polygons under this deformation (note that rank-$1$ cusps stay parabolic). Then by developing these cuspid polygons we have projective structures for each component of $\partial M$, which we can restrict to an element $\tau(H_0)\in\mathcal{T}(\partial M)$. Define then the map:
\begin{equation}\label{Tmap}
T:U\rightarrow \mathcal{T}(\partial M)\times \mathbb{C}^k, T(H_0) = (\tau(H_0),Tr(H_0(\alpha_1)^2,\ldots,Tr(H_0(\alpha_k)^2))
\end{equation}
were $U\subset \text{Def}_0(M)$ is the sufficiently small neighbourhood of $H$. Notice that $T$ is a holomorphic map were $\text{dim}_\mathbb{C}(U) \geq \text{dim}_\mathbb{C}(\mathcal{T}(M)) + k$, were this last expression is the dimension of the range of $T$ in (\ref{Tmap}).

We claim that $T^{-1}(\tau,4,\ldots,4) = \{H\}$. Indeed, $Tr(H_0(\alpha_i))=\pm2$ are the equations for $\alpha_i$ to be parabolic. Then the deformation space is characterized entirely by the conformal structure at infinity, concluding our claim.

Hence the dimensions at (\ref{Tmap}) must coincide and $T$ is an open map. As in Theorem 5.8.2, for $(\tau_0,d_1,\ldots,d_k)$ close to $(\tau,\infty,\ldots,\infty)$ there is a hyperbolic structure in $M_{(d_1,\ldots,d_k)}$ with holonomy map $H_0$ that satisfies $\tau(H_0)=\tau_0$. Since this manifold is hyperbolic, $\tau_0$ coincides with the conformal class at infinity. To close the argument, as $(\tau_0,d_1,\ldots,d_k) \rightarrow (\tau,\infty,\ldots,\infty)$, $H_0\rightarrow H$. Since $M$ is discrete, this garanties that $M_{(\tau_0,d_1,\ldots,d_k)}$ converges geometrically to $M$.
\end{proof}

\begin{prop}\label{smalldef}
Let $M\in QF(M)$ be a sequence with additive geometric limit $N_1 \sqcup \ldots \sqcup N_k$ (as in Propostion \ref{proplimit}). Then for any sufficiently small deformation of $N_1,\ldots,N_k$ there exist a sequence $\widehat{M}_n\in QF(M)$ that has them as additive geometric limit.
\end{prop}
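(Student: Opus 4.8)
The plan is to combine the structural decomposition from Proposition \ref{proplimit} with the Generalized Dehn-filling theorem (Theorem \ref{Dehnfill}) applied to each limit component. First I would recall that the additive geometric limit $N_1 \sqcup \ldots \sqcup N_k$ records the data of how the sequence $M_n$ degenerates: each $N_i$ is a geometrically finite manifold obtained from pieces of $M$ by drilling rank-$2$ cusps, pinching peripheral curves into rank-$1$ cusps, and cutting along essential cylinders that separate the limit components. The key observation is that a ``small deformation'' of the collection $N_1, \ldots, N_k$ means choosing for each $i$ a nearby geometrically finite structure $\widehat{N}_i$, which by the Ahlfors-Bers parametrization amounts to choosing a nearby conformal class at infinity $\tau_i^0$ while keeping the cusp structure (the parabolicity of the rank-$1$ curves) fixed. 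This is precisely the space $\text{Def}_0(N_i)$ that the previous section was built to control.

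Next I would reverse the degeneration. For each limit component $N_i$, Theorem \ref{Dehnfill} produces, for Dehn-filling parameters $(d_1^{(i)}, \ldots)$ sufficiently close to $(\infty, \ldots, \infty)$ and conformal class close to $\tau_i^0$, a hyperbolic structure on the appropriate Dehn-filling of $N_i$ whose holonomy is close to that of $\widehat{N}_i$ and which converges geometrically back to $\widehat{N}_i$ as the parameters tend to $\infty$. The rank-$2$ cusps of $N_i$ that arose by drilling are refilled along the original curves, recovering the solid-torus pieces of $M$; this is the honest geometric Dehn filling. The rank-$1$ cusps that arose by pinching peripheral curves, and the cusps arising from the cutting cylinders, must be handled by reglueing: along each cutting cylinder two rank-$1$ cusps (one on each side, sitting in two different components $N_i$ and $N_j$) are to be reassembled into a single Margulis tube of $M_n$. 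The plan is to choose a sequence of filling/regluing parameters converging to the degenerate values so that the resulting manifolds are all homeomorphic to the interior of $M$, hence lie in $QF(M)$ once we verify convex cocompactness, and so that their geometric limits along the chosen basepoints are exactly the deformed $\widehat{N}_1, \ldots, \widehat{N}_k$.

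The construction of $\widehat{M}_n$ then proceeds diagonally: for each $n$ I would select deformation and filling parameters $\tau^0$, $\{d^{(i)}_\bullet\}$ that are within $1/n$ of the degenerate limit while staying inside the open neighbourhood $U$ furnished by Theorem \ref{Dehnfill} for each component, glue the resulting pieces $M^i_n$ back along the reconstituted Margulis tubes, and check using Proposition \ref{proplimit} that the additive geometric limit of $\widehat{M}_n$ is the prescribed deformation. Concretely, one verifies that the volumes of the convex cores $V_C(\widehat{M}_n)$ stay uniformly bounded (which follows since they converge to $\sum_i V_C(\widehat{N}_i)$ plus bounded tube contributions), that the basepoints placed on the closed geodesics of each reglued piece give geometric limits converging to $\widehat{N}_i$, and that the cutting cylinders reappear as the divisions between distinct geometric limits, so the list of hypotheses in Proposition \ref{proplimit} is met.

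The main obstacle I anticipate is the regluing along the cutting cylinders. Unlike the drilled rank-$2$ cusps, where Theorem \ref{Dehnfill} directly applies to a single component and refills a solid torus, the cutting cylinders require coherently matching the two rank-$1$ cusps living in two distinct limit components $N_i$ and $N_j$ into a single thin Margulis tube, and one must ensure the gluing is compatible with the deformed conformal structures on both sides and with the homeomorphism type of $M$. The delicate point is that the deformation parameters of $\widehat{N}_i$ and $\widehat{N}_j$ cannot be chosen independently at such a cusp: the complex length and translation data of the core curve of the reconstituted tube must agree from both sides, so the regluing is really a Dehn-filling of a rank-$1$-cusped manifold formed by (temporarily) amalgamating the two components along the cylinder, to which a version of Theorem \ref{Dehnfill} must be applied. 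Making this amalgamation precise, and checking that the joint filling parameters still range over a full neighbourhood so that arbitrary small deformations of each $N_i$ are realizable, is where the technical weight of the argument will lie; the drilling and pinching cases are comparatively routine applications of the theorem component by component.
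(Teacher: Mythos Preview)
Your overall architecture is right: deform each $N_i$, use Theorem \ref{Dehnfill} to refill the drilled rank-$2$ cusps with the original filling coefficients $d_{j,n}\to\infty$, then reassemble the pieces and take a diagonal sequence. Where your proposal diverges from the paper, and where it runs into trouble, is the handling of the rank-$1$ cusps coming from pinching and from cutting cylinders.

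You propose to reconstitute each cutting cylinder directly as a Margulis tube whose core geodesic is shared by two adjacent pieces, and you correctly flag that this forces a compatibility constraint between the deformations of $\widehat{N}_i$ and $\widehat{N}_j$: the complex length of the core must match from both sides. That is a real obstruction in your formulation, and Theorem \ref{Dehnfill} as stated only fills tori, so it cannot be applied to a rank-$1$ cusp or to an ``amalgamated'' pair of rank-$1$ cusps without further work. The paper sidesteps this entirely by using \emph{Klein--Maskit combination}. After filling the rank-$2$ cusps of each $N_i$ one has manifolds $(N_i)_{\tau,n}$ with only rank-$1$ cusps; the paired rank-$1$ cusps coming from cutting cylinders are then glued by Klein--Maskit combination along smaller and smaller horodisks, and the unpaired rank-$1$ cusps coming from pinching are closed by self Klein--Maskit combinations. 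The crucial point is that this operation is done purely on the Kleinian groups and imposes no compatibility on the deformations of the individual $\widehat{N}_i$: the components can be deformed independently, and Klein--Maskit produces a geometrically finite manifold homeomorphic to $M$ minus some peripheral curves, with a \emph{new rank-$2$ cusp} at each combination site. Theorem \ref{Dehnfill} is then applied a second time to these new rank-$2$ cusps, with filling coefficients of the form $(n,1)$ (meridian isotopic to the boundary) so that the topology of $M$ is recovered. A final diagonal argument gives the sequence $\widehat{M}_n\in QF(M)$.

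So the missing ingredient in your plan is Klein--Maskit combination: it converts the problematic rank-$1$ gluings into rank-$2$ cusps to which the Dehn-filling machinery applies, and it removes the compatibility constraint you were worried about.
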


\begin{proof}
Fix one component of the additive geometric limit $N_i$. Each rank-$2$ cusp in $N_i$ is the limit of a Margulis tube that stays inside $C_{M_n}$. Then choosing $\alpha_j,\beta_j$ meridian and longitude for the $j$ rank-$2$ cusp of $N_i$, the filling of this tube at $M_n$ is given by $d_{j,n}=(a_{j,n},b_{j,n})$. Hence $d_{j,n}\rightarrow\infty$ and $(N_i)_{d_{1,n},\ldots,d_{J,n}}$ is homeomorphic to $M^i_n$. Then applying Theorem \ref{Dehnfill}, $(N_i)_{\tau,d_{1,n},\ldots,d_{J,n}}$ is a sequence of hyperbolic manifolds homeomorphic to $M^i_n$, with only rank-$1$ cusps and geometric limit $(N_i,\tau)$. Let us call this sequence $(N_i)_{\tau,n}$ for simplicity.

Now, $(N_1)_{\tau,n}\sqcup\ldots\sqcup(N_i)_{\tau,n}$ have pairings between some of their rank-$1$ cusps corresponding to cutting cylinders. Perform then Klein-Maskit combinations between these rank-$1$ cusps(see [\cite{Maskit88}, Chapter 7] for more details in Klein-Maskit combination theory) with smaller and smaller horodisks, and in the glued manifold (which is homeomorphic to $M$ as one can observe easily) close all rank-$1$ cusp by self-Klein-Maskit combinations to obtain a sequence of hyperbolic manifolds $(M_{\tau,n})_m$ with the following properties:

\begin{itemize}
	\item $(M_{\tau,n})_m$ is homeomorphic to $M$ minus some curves parallel to the boundary. These rank-$2$ cusps are the only parabolics subgroups.
	\item $(N_1)_{\tau,n}\sqcup\ldots\sqcup(N_i)_{\tau,n}$ are all the possible geometric limits as $m\rightarrow\infty$ and we move the base point around.
\end{itemize}

Use again Theorem \ref{Dehnfill} to fill-in the rank-$2$ cusps of $(M_{\tau,n})_m$ by generalized Dehn-filling. These gives manifolds arbitrarily close to $(M_{\tau,n})_m$, so by the diagonal argument we can name these parabolic free manifolds by $(M_{\tau,n})_m$ and still have that $(N_1)_{\tau,n}\sqcup\ldots\sqcup(N_i)_{\tau,n}$ are the additive geometric limit as $m\rightarrow\infty$ and we move the base point around. In order to see that these manifolds are from the topological type desired, the original rank-$2$ cusps are filled as they were in the original sequence, while for the additional cusps created from Klein-Maskit combinations we can select fillings that don't change the topology. Indeed, since these cusps are parallel to the boundary, we can pick coefficients $d=(a,b)\in S^2$ such that the filled manifold is homeomorphic to $M$. Take then, for a tori component, $\alpha$ to be the curve isotopic to the boundary and $\beta$ a longitude. Then it is an easy $3$-manifold exercise to see that the coefficients $d=(n,1)$ fill-in a manifold homeomorphic to $M$.

By doing the diagonal argument one more time, we have constructed a sequence $M_{\tau,n}$ of hyperbolic manifolds homeomorphic to $M$ such that $(N_i,\tau)$ are the additive geometric limit.

\end{proof}

\section{Additive continuity of the renormalized volume}\label{sec:addlimit}

\begin{theorem}\label{continuitythm}
Let $M$ be a convex co-compact hyperbolic manifold with $\partial M \neq \emptyset$ incompressible. Let $M_n \in QF(M)$ be a sequence such that $V_\text{R}(M_n)$ converges. Then we can select finite many base points such that (possibly after taking a subsequence) $N_1,\ldots, N_k$ are the geometric limits corresponding to the base points (in the sense of Proposition \ref{proplimit}) and
\begin{equation}
	\lim_{n\rightarrow\infty} V_R(M_n) = \sum_{i=1}^{k} V_R(N_i)
\end{equation}
\end{theorem}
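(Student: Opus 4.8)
The plan is to reduce to Proposition \ref{proplimit}, split the defining integral for $V_R$ exactly along the decomposition it produces, and then recover each summand by reproducing the Moroianu--Guillarmou--Rochon continuity argument on the corresponding piece. First I would check that Proposition \ref{proplimit} is available, i.e.\ that $V_C(M_n)$ is uniformly bounded. Since $V_R(M_n)$ converges it is in particular bounded; and the renormalized volume and the volume of the convex core differ by a quantity controlled only by the (fixed) topological type of $M$, by the comparison inequalities between $V_R$ and $V_C$ (see \cite{Schlenker13}). Hence $V_C(M_n)$ is uniformly bounded, and after passing to a subsequence Proposition \ref{proplimit} yields the additive geometric limit $N_1\sqcup\cdots\sqcup N_k$ together with the decomposition $M_n=\bigsqcup_{i=1}^k M^i_n$, the pieces being separated by the family $\mathcal{C}$ of cutting cylinders and containing in their interior the drilling and pinching Margulis tubes.

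Next I would pass to the analytic description $V_R(M_n)=\mathrm{FP}_{z=0}\int_{M_n}\rho_n^z\,d\mathrm{vol}$, with $\rho_n$ a geodesic boundary defining function, and split it along the decomposition. The essential point is that the cutting family $\mathcal{C}$ is a union of surfaces, hence of measure zero, so that $\int_{M_n}\rho_n^z\,d\mathrm{vol}=\sum_{i=1}^k\int_{M^i_n}\rho_n^z\,d\mathrm{vol}$ with no residual neck term. Each restricted integral retains the meromorphic continuation and simple pole at $z=0$ governed by the portion of the conformal boundary $\partial M_n$ lying on $M^i_n$, and, the finite part being linear in the finitely many summands, this gives the exact identity
\[
V_R(M_n)=\sum_{i=1}^k W_i(n),\qquad W_i(n):=\mathrm{FP}_{z=0}\int_{M^i_n}\rho_n^z\,d\mathrm{vol}.
\]

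It then remains to prove $W_i(n)\to V_R(N_i)$ for each $i$. By Proposition \ref{proplimit} the piece $M^i_n$ converges geometrically to $N_i$, the conformal structure at infinity of $M^i_n$ converging to that of $N_i$ while the pinched peripheral curves and the cutting cylinders degenerate to the rank-$1$ cusps of $N_i$. On compact subsets $\rho_n|_{M^i_n}$ converges to a geodesic boundary defining function of $N_i$, and the finite-part integral is continuous under exactly this mode of convergence of convex co-compact metrics to a geometrically finite metric with rank-$1$ cusps. This is the continuity established in \cite{MoroianuGuillarmouRochon}, which I would reproduce for the restricted integrals $W_i(n)$; summing over $i$ and using the convergence of the full sequence $V_R(M_n)$ then gives $\lim_n V_R(M_n)=\sum_{i=1}^k V_R(N_i)$.

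\textbf{Main obstacle.} The delicate step is the last one: controlling the finite-part extraction uniformly across the degenerating regions. For finite $n$ each $M^i_n$ is convex co-compact, so the pole of $\int_{M^i_n}\rho_n^z$ comes solely from the smooth conformal boundary; in the limit, a pinched peripheral curve becomes a node of the boundary and a cutting cylinder recedes to infinity, both producing a rank-$1$ cusp, so the source of the divergence changes shape. One must therefore show that the divergent behaviour near each forming cusp converges to the MGR contribution of a rank-$1$ cusp, with no term surviving from the neck. The cutting-cylinder case is the most delicate, since such a cylinder lies strictly between two distinct limits $N_i$ and $N_j$ and develops a rank-$1$ cusp on each side; here one must check that the two cusp contributions, governed by the same thin tube, are correctly apportioned to $W_i(n)$ and $W_j(n)$ with no cross term. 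It is precisely the incompressibility hypothesis, together with the structure of Proposition \ref{proplimit} placing cutting cylinders between distinct geometric limits, that I expect to guarantee this clean apportionment in the limit.
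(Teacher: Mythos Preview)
Your plan is essentially the paper's own: bound $V_C(M_n)$ from the convergence of $V_R(M_n)$ (the paper invokes \cite{BridgemanCanary15} rather than \cite{Schlenker13}, but either comparison works), apply Proposition~\ref{proplimit}, split the finite-part integral along the measure-zero family $\mathcal{C}$ to get $V_R(M_n)=\sum_i W_i(n)$, and then show $W_i(n)\to V_R(N_i)$ by adapting \cite{MoroianuGuillarmouRochon} piece by piece.

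Where your proposal stops is exactly where the paper's real work begins. The ``main obstacle'' you flag is not resolved by incompressibility or by the structural statement that cutting cylinders sit between distinct limits; the paper has to carry out an explicit local computation. It parametrizes a neighbourhood of each degenerating tube by the model $(\mathbb{R}/\tfrac12\mathbb{Z})_w\times\mathbb{H}^2_\zeta$ with the metric $g_\ell$, chooses the cutting cylinder in these coordinates as a region $C_\ell$ bounded by two geodesics from $i\ell$ to points $a_\ell,b_\ell\to 0$, and decomposes the neighbourhood into three regions $R_1(\ell),R_2(\ell),R_3(\ell)$; on each it computes the finite part directly in adapted polar/geodesic coordinates, using the uniform expansions of the conformal factor from \cite{MoroianuGuillarmouRochon} (their Propositions~5.1, 6.7 and Corollary~5.3) to control $\omega_\ell$ and show that the $R_2(\ell)$ contribution vanishes while $R_1(\ell)$ and $R_3(\ell)$ converge to the corresponding integrals on $N_i$. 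The ``clean apportionment'' you hope for is thus obtained by a concrete choice of the cut $C_\ell$ compatible on both sides and a dominated-convergence/residue calculation, not by a soft argument. Your outline is correct, but to turn it into a proof you must actually execute this cusp-region analysis.
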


\begin{proof}
First, because of [\cite{BridgemanCanary15}, Theorem 1.1], we have that $V_\text{C}(M_n)$ is uniformly bounded. Then take into account the results of Proposition \ref{proplimit} as well as the definition for $M^i_n$. Then this theorem will follow from

\begin{equation}\label{mainlimit}
	\lim_{n\rightarrow\infty} \text{FP}_{z=0}\int_{M^i_n} \rho^z d\text{vol} = V_\text{R}(N_i)= \text{FP}_{z=0}\int_{N_i} \rho^z d\text{vol}
\end{equation}

since

\begin{equation}
	\sum^k_{i=1}\text{FP}_{z=0}\int_{M^i_n} \rho^z d\text{vol} = V_\text{R}(M_n),
\end{equation}

In order to show \ref{mainlimit} we will adapt \cite{MoroianuGuillarmouRochon}. The broad idea is to select neighbourhoods around the rank-$1$ cusps of $N_i$ and show the convergence of the integral at each cusp neighbourhood and on the complement of all of them. We will be using [\cite{MoroianuGuillarmouRochon}, Proposition 5.1] and [\cite{MoroianuGuillarmouRochon}, Corollary 5.3 ] in several steps of the proof, which are technical results for convergence of conformal factors after developing cusps in surfaces, which also applies in our situation. Let us examine then each possible scenario.

\textbf{Near a rank-$1$ cusp obtained by pinching:} Select the $\epsilon$ thin part of $C_{M_n}$ corresponding to the limit rank-$1$ cusp $\mathcal{C}$. Then take the region obtained by normal exterior geodesics to $\partial C_{M_n}$ as the neighbourhood $U^\mathcal{C}_n$ to analize in $M_n$. Given the geometric convergence (i.e. select basepoints in $M_n$ for the possible geometric limit corresponding to $\mathcal{C}$, giving all these sets parametrizations in $\mathbb{H}^3$), $U^\mathcal{C}_n$ converges to $U^\mathcal{C}$, the $\epsilon$ thin part corresponding to the rank-$1$ cusp. In order to calculate $V_R$ in these neighbourhoods, \cite{MoroianuGuillarmouRochon} parametrizes $U^\mathcal{C}_n, U^\mathcal{C}$ as follows.

Start considering the half upper-space model for $\mathbb{H}^3$. For a loxodromic transformation $\gamma$ with multiplier $e^{\ell(1+i\nu)}$ and fixed points $p,q\in \overline{\mathbb{C}}$, consider the flow lines of $\gamma^t$ for $t\in\mathbb{R}$. Consider as well the hyperplanes between $p$ and $q$ that are image of half-spheres centered at $0$ under a transformation that sends $\{0,\infty\}$ to $\{p,q\}$. Use the stereographic projection to parametrize the half-sphere of radius $1$ at $0$ (and hence also the corresponding hyperplane) by $\{z=v+iu, u>0\}$. Since the flow lines identify the hyperplanes with one another, we can parametrize $\mathbb{H}^3$ by $(w,\zeta)$, where $w=\frac{t}{2}$, $\zeta= z\ell$ and $(0,z)$ is our first parametrized hyperplane. In these variables, $\gamma$ sends $(w,\zeta)$ to $(w+\frac{1}{2},\zeta)$, $\{(w,i\ell), w\in\mathbb{R}\}$ parametrize the geodesic joining $p$ and $q$ and $\{-\frac{1}{4}\leq w \leq \frac{1}{4}, \zeta\}$ parametrizes the fundamental region for $\gamma$.

As in [\cite{MoroianuGuillarmouRochon}, Section 4] we have a neighbourhood of the Margulis tube in $N_i$ isometric to a neighbourhood of the manifold $(\mathbb{R}/\frac{1}{2}\mathbb{Z})_w\times \mathbb{H}^2_{\zeta=v+iu}$ equipped with the metric

\begin{equation}\label{model1gl}
\begin{gathered}
g_\ell=  \frac{{du}^2+{dv}^2+((1+\nu^2)R^4-4\nu^2\ell^2u^2)dw^2
+ 2\nu(R^2-2u^2)dwdv+4\nu uv dudw}{u^2}\end{gathered} 
\end{equation}
where $\exp^{\ell(1+i\nu)}$ is the multiplying factor of the geodesic of $M_n$ converging to a parabolic, and $R:=\sqrt{u^2+v^2+\ell^2}$. These neighbourhoods are intersections with sufficiently (but uniformly) small half hemispheres in $\mathbb{H}^3$ and then take quotient.

The limit model at $N_i$ is the manifold $(\mathbb{R}/\frac{1}{2}\mathbb{Z})_w\times \mathbb{H}^2_{\zeta=v+iu}$ equipped with the metric 

\begin{equation}\label{model1g0}
\begin{gathered}
g=  \frac{{du}^2+{dv}^2+(1+\nu^2)(u^2+v^2)^2dw^2+ 2\nu(v^2-u^2)dwdv+4\nu uv dudw}{u^2}\end{gathered}. 
\end{equation}

\textbf{Near a rank-$1$ cusp obtained by cutting cylinders:} Similarly as before, we have an isometry with a neighbourhood of $((\mathbb{R}/\frac{1}{2}\mathbb{Z})_w\times \mathbb{H}^2_{\zeta=v+iu},g_L)$, except that in this case we also need to cut along the corresponding cylinder $C$. Since again we have the convergence of $U^\mathcal{C}_\ell$ to $U^\mathcal{C}$, the region carved out by the cylinder $C$ is contained in smaller and smaller balls around $u=v=0$. Let us redefine then the cylinders $C$ such that the new cutting cylinders have a friendly description in $(w,v+iu)$ coordinates. For a fix $w$, take the lines (hyperbolic lines in $\overline{\mathbb{H}^2}$) joining $i\ell$ with $a_\ell$ and $i\ell$ with $b_\ell$, denoted by $\overline{i\ell,a_\ell}$ and $\overline{i\ell,b_\ell}$ where $b_\ell<a_\ell$. In order to choose $a_\ell, b_\ell$, observe that since all the region carved out by $C$ collapses at $u=v=0$, we can make a choice of $a_\ell,b_\ell$ for a given side of $C$. If we now see the other geometric limit adjacent to $C$ at the side of $a_\ell$, the boundary of the new cylinder is also of the type $\{-\frac{1}{4} \leq w \leq \frac{1}{4} \}\times\overline{i\ell,\hat{a}_\ell}$ in the coordinates of these geometric limit. This follows from the definition of the coordinates in terms of the loxodromic transformation $\gamma$. Then, we had selected one of $a_\ell,b_\ell$ for the adjacent geometric limit, so we make an arbitrary choice for the remaining of $a_\ell,b_\ell$ and then move to the next adjacent geometric limit. The process ends when, after moving cyclically around the geodesic corresponding to $\gamma$, we arrive to the final possible geometric limit.

Then in this case $U^\mathcal{C}_n$ is the neighbourhood of $u=v=0$ described in the pinching case minus $C_\ell:=\{(w,v+iu)| -\frac{1}{4} \leq w \leq \frac{1}{4}, \zeta \in \Delta_\ell \}$, where $\Delta_\ell$ is the region between $\overline{i\ell,a_\ell}$ and $\overline{i\ell,b_\ell}$.

  The picture one should have in mind is Figure \ref{fig:fig0}.

\begin{figure}
  \includegraphics[width=\linewidth]{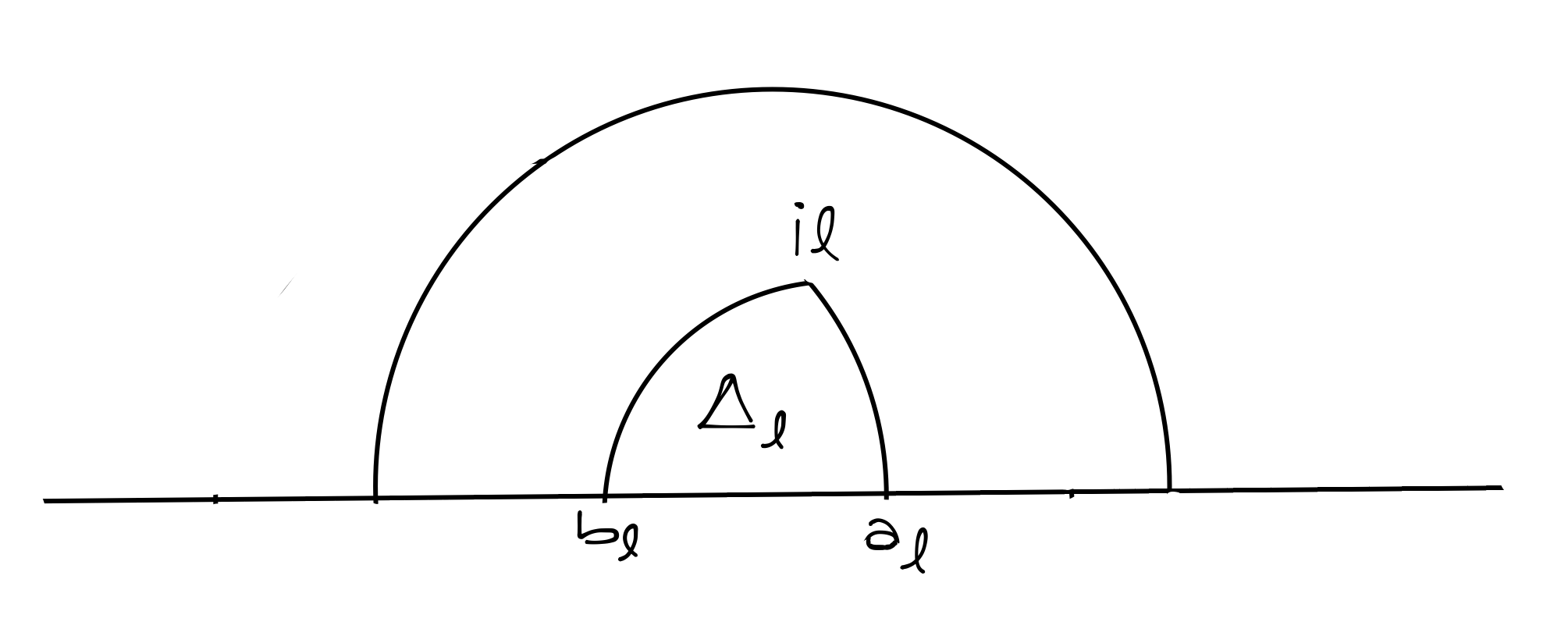}
  \caption{$\Delta_\ell$}
  \label{fig:fig0}
\end{figure}

The neighbourhood (prequotient) is the set between the halfspace previously mentioned and the pseudo-hyperplane obtained by lifting the cylinder $C$ to $\mathbb{H}^3$. This pseudo-hemisphere collapses to the parabolic fix point as $n\rightarrow\infty$, so the limit model for $N_i$ is the same as in the previous case (\ref{model1g0}).

\textbf{Limit far from the rank-$1$ cusps:} In this case the limit follows from showing convergence in compacts subsets of some geodesic boundary defining function $\rho_n$ of $M_n$ to the the geodesic boundary defining function $\rho$ of $N_i$. Say then that $\mathcal{K}$ is the complement of the rank-$1$ cusp neighbourhoods. Then $\rho$ is the solution of the Hamilton-Jacobi equation

\begin{equation}
\left|\frac{d\rho}{\rho}\right|_{g}^2=1, \quad (\rho^2g)|_{\partial M}=h^{\rm hyp}.
\end{equation}

Then we define an auxiliar geodesic boundary function $\widehat{\rho_n}$ as the solution of

\begin{equation}\label{hamjab0}  
\left|\frac{d\widehat{\rho}_n}{\widehat{\rho}_n}\right|_{g_n}^2=1, \quad \widehat{\omega}_n|_{\rho=0}=0
\end{equation}

where $\widehat{\rho_n} = e^{\widehat{\omega_n}}\rho$.

The result follows as done in \cite{MoroianuGuillarmouRochon}, so we will just cite their work. The main difference is that we have to apply to each component $M^i_n$ since our case doesn't have to be connected. Next is [\cite{MoroianuGuillarmouRochon}, Lemma 6.3]

\begin{lem}\label{lemhatomega}
There exists $\delta>0$ such that for sufficiently large $n$, the Hamilton-Jacobi equation \eqref{hamjab0} has a solution $\widehat{\omega}_n$ in 
$\mathcal{K}\cap \{\rho<\delta\}$ and $\widehat{\omega}_n$ converges to $0$ in $\mathcal{C}^k$-norms there for all $k$.
\end{lem}

The relationship between $\widehat{\rho}_n$ and $\rho_n$ is
\begin{equation}\label{rhoepsinK}
\rho_n=e^{\omega_n}\widehat{\rho}_n
\end{equation} 
where $\omega_n$ is the solution of 
\[\left|\frac{d\rho_n}{\rho_n}\right|_{g_{n}}^2=1, \quad \omega_n|_{\rho=0}=\varphi_n\]
and $\varphi_n$ is the uniformization factor such that $h_n^{\rm hyp}:=e^{2\varphi_n}h_n$ is hyperbolic if $h_n:=(\rho^2g_n)|_{\rho=0}$;
The Hamilton-Jacobi equation \eqref{rhoepsinK} has a unique solution in $\mathcal{K}$ near $M$ and in particular one has $\omega|_{\mathcal{K}\cap M}=\varphi=0$.

Then this result is used in order to show continuity far from rank-$1$ cusps.

\begin{prop}\label{limoutsidecusp}[\cite{MoroianuGuillarmouRochon}, Proposition 8.1]
Let $\rho_n\in\mathcal{C}^\infty(\overline{{M_n}})$ be a geodesic boundary defining function such that $h_n:=(\rho_n^2g_n)|_{\partial M_n}$ is the unique hyperbolic metric in the conformal boundary ($\rho_n$ is uniquely defined near $\partial M$). 
Let $\rho\in \mathcal{C}^\infty(\overline{N_i}_c)$ be a geodesic boundary defining function of $\overline{N_i}$ with $h:=(\rho^2g)|_{\partial{N_i}}$ being the unique finite volume hyperbolic metric in the conformal boundary ($\rho$ is uniquely defined near $\overline{N_i}$). 
Let $\theta^i_n$ be a family of smooth functions on $\overline{M^i_n}$ with support in $\overline{{M^i_n}}$ and converging in all $\mathcal{C}^k$-norms to $\theta_i$, a function in $\overline{N_i}$ that vanishes in a neighbourhood of the rank-$1$ cusps. The following limit holds 
\[ \lim_{n\to \infty}\Big({\rm FP}_{z=0} \int_{M^i_n}\theta^i_n \rho_n^{z}\, {\rm dvol}_{g_n}\Big)= 
{\rm FP}_{z=0} \int_{N_i}\theta_i \rho^{z}\, {\rm dvol}_{g}.\]
\end{prop}

We next study the behaviour of the renormalized volume in the regions containing the degeneration to rank $1$-cusps. We notice that the main Theorem \ref{continuitythm} follows from Proposition \ref{limoutsidecusp} and the following 
\begin{prop}\label{nearthecusp}
With the notations and assumptions of Proposition \ref{limoutsidecusp} and Theorem \ref{continuitythm}, we have
\[
\lim_{n\rightarrow\infty}\,  {\rm FP}_{z=0}\, \int_{M^i_n}(1-\theta^i_n)\rho^z_n{\rm dvol}_{g_n}=
{\rm FP}_{z=0}\, \int_{N_i}(1-\theta_i)\rho^z_0{\rm dvol}_{g_0}.
\]
\end{prop}
\begin{proof} We can assume that $(1-\theta^i_n)$ is supported in the region $U^\mathcal{C}_n$ for each rank-$1$ cusp $\mathcal{C}$, then we can assume that we have the parametrization $(w,\zeta=v+iu)$ of (\ref{model1gl}), where we have forgot the $n$ parameter and use rather $\ell$ with $\ell\to 0$, and $\nu=\nu(\ell)$ is converging to some limit $\nu_0$ as $\ell\to 0$. Then we are going to show equality at each rank-$1$ cusp appearing at $N_i$ from either a pinching curve or a cutting cylinder from $M^i_n$. The rest of the proof follows [\cite{MoroianuGuillarmouRochon}, Proposition 8.2] for both pinching and cutting cylinder.

First, using \ref{model1gl}, we can calculate ${\rm dvol}_{g_\ell}$ as
\[{\rm dvol}_{g_\ell}=\frac{R^2dudvdw}{u^3}\]
where $R^2=u^2+v^2+\ell^2$. Then the results follows from showing
\begin{equation}\label{renormint} 
\lim_{\ell\to 0}{\rm FP}_{z=0}\int_{(u,v,w,\ell)\in \overline{\mathcal{U}}_\ell} \rho_\ell^z \chi_\ell \frac{R^2dudvdw}{u^3}=
{\rm FP}_{z=0}\int\rho_0^z \chi_0 \frac{R^2dudvdw}{u^3}
\end{equation}
for the pinching case, and

\begin{equation}\label{renormintcyl} 
\lim_{\ell\to 0}{\rm FP}_{z=0}\int_{(u,v,w,\ell)\in \overline{\mathcal{U}}_\ell \setminus C_\ell} \rho_\ell^z \chi_\ell \frac{R^2dudvdw}{u^3}=
{\rm FP}_{z=0}\int\rho_0^z \chi_0\frac{R^2dudvdw}{u^3}
\end{equation}
for the cutting cylinder case. In both limits $\rho_\ell=\rho_n$ is the function solving 

\begin{equation}  \label{hamjab}
\left|\frac{d\rho_\ell}{\rho_\ell}\right|_{g_\ell}^2=1, \quad \rho_l=e^{\omega_\ell}U \text{ for some } \omega_\ell \text{ satisfying } (\omega_\ell)_{U=0}=\varphi_\ell
\end{equation}

with $e^{2\varphi_\ell}h_\ell$ being hyperbolic if $h_\ell$ is given by $h_\ell=\frac{g_\ell}{U^2}$ (here we are also introducing the notation $U = \frac{u}{R}$), and $\chi_\ell \in \mathcal{C}^\infty_c(\overline{\mathcal{U}}_\ell)$ is equal to $1$ near $u=v=0$ and converges to $\chi_0$. Note that for the convergence in \eqref{renormint}, \eqref{renormintcyl}, we can choose $\chi$ independent to $\ell$(as in \cite{MoroianuGuillarmouRochon}). We will study the convergence of \eqref{renormintcyl}, since the proof of \eqref{renormint} appears in \cite{MoroianuGuillarmouRochon}. Nevertheless, the limits to be calculated are analogous to one another, so the reader can follow both arguments simultaneously.

Let us start by dividing the integral \eqref{renormintcyl} as the sets $R_1(\ell), R_2(\ell), R_3(\ell)$ defined by:
\begin{align}
R_1(\ell)&= \{(w,\zeta=v+iu) \in U^\mathcal{C}_\ell \;|\; u\geq \vert v\vert,\; u^2+ v^2 \geq \ell^2\}\\
R_2(\ell)&= \{(w,\zeta=v+iu) \in U^\mathcal{C}_\ell \;|\; u^2 + v^2 \leq \ell^2\}\\
R_3(\ell)&= \{(w,\zeta=v+iu) \in U^\mathcal{C}_\ell \;|\; \vert v\vert\geq u,\; u^2+v^2 \geq \ell^2\}.
\end{align}
Compare with the proof of [\cite{MoroianuGuillarmouRochon}, Proposition 8.2] for the notation (see Figure \ref{fig:fig1}).

\begin{figure}
  \includegraphics[width=\linewidth]{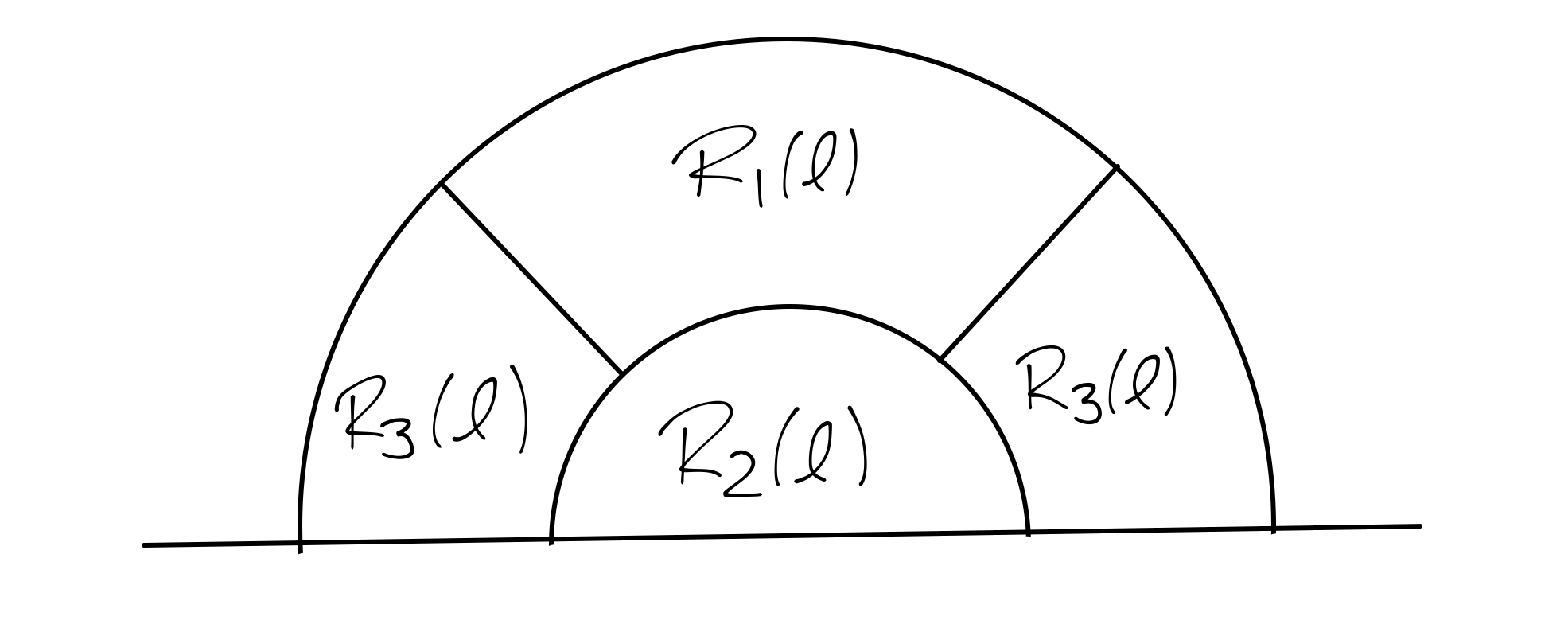}
  \caption{$w$-slice of $R_1(\ell), R_2(\ell), R_3(\ell)$}
  \label{fig:fig1}
\end{figure}

We can write the $R_1(\ell)$-term as

$$
R_1(\ell)= \{ (w,r,\theta) \; | \;  \frac{\pi}{2}\leq \theta \leq \frac{3\pi}{2}, \; \ell \leq r \leq 2\delta, \; -\frac{1}{4}\leq w \leq\frac{1}{4} \} 
$$
where we use the following coordinates,
\begin{equation}
  u = r\sin\theta, \quad v = r\cos\theta, \quad w.
\label{bdf.21}
\end{equation}

\begin{figure}
  \includegraphics[width=\linewidth]{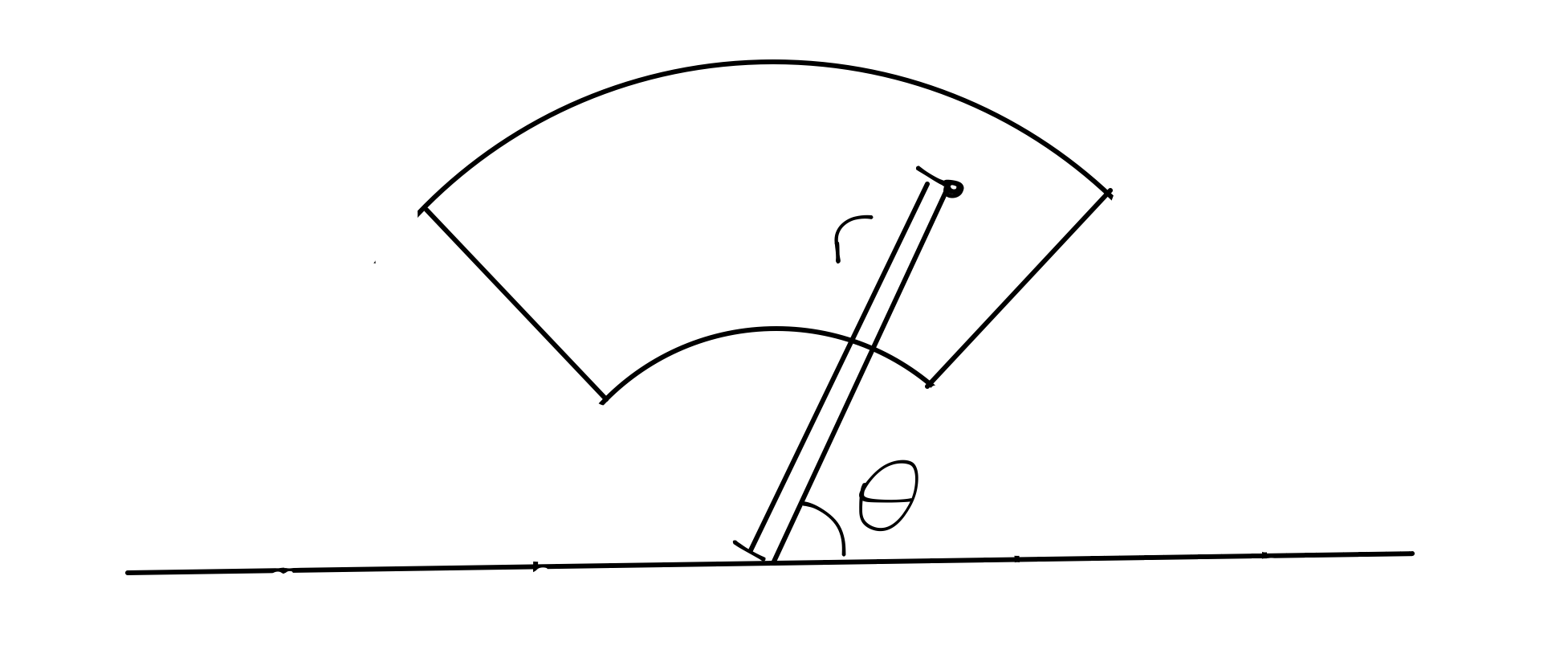}
  \caption{Coordinates of $R_1(\ell)$}
  \label{fig:fig2}
\end{figure}

Restricted to this region, $\int  \chi \frac{R^2dudvdw}{u^3} = \int \chi \frac{1 + (\ell^2/r^2) d\theta drdw}{\sin^3\theta}$ both integrals are finite and there is no need to renormalize.  Thus,
\[
\begin{aligned}
{\rm FP}_{z=0}\int_{R_1(\ell) \setminus C_\ell} \rho_\ell^z \chi \frac{R^2dudvdw}{u^3}&= \int_{R_1(\ell) \setminus C_\ell} \chi(r\sin\theta,r\cos\theta,w)
\left( 1+ \frac{\ell^2}{r^2} \right) \sin^{-3}\theta d\theta drdw
\end{aligned}
\]
where $R_1(\ell)=\{ (w,V,u) \;|\; -\frac{1}{4}\leq w\leq \frac{1}{4}, -1\leq V\leq 1, \ell\leq u \leq \delta\}$. We can use dominated convergence (recall that $\frac{\ell^2}{r^2} \leq 1$) to deduce 
that 
\begin{equation}\label{bdf.22}
\begin{split}
\lim_{\ell\to 0} {\rm FP}_{z=0}\int_{(w,r,\theta)\in R_1(\ell)\setminus C_\ell} \rho_\ell^z \chi \frac{1 + (\ell^2/r^2) d\theta drdw}{\sin^3\theta}=&
\int_{-\frac{1}{4}}^{-\frac{1}{4}}
\int_{\ell}^{2\delta}\int_{\frac{\pi}{2}}^{\frac{3\pi}{2}} \frac{\chi(r\sin\theta,r\cos\theta,w)}{\sin^3\theta} d\theta drdw\\
=&{\rm FP}_{z=0}\int_{R_1(0)} \rho_0^z \frac{\chi d\theta drdw}{\sin^3\theta}.
\end{split}
\end{equation}

Next, let's look to the region $R_2(\ell)$

\begin{equation}
R_2(\ell):= \left\{(w,v,u)\;|\; 0\leq u,\; u^2+v^2\leq \ell^2,\; -\frac14\leq w \leq\frac14 \right\}
\end{equation}

Define then the change of coordinates:

\begin{equation}
u=\frac{\ell\sin\theta(\ell^2+\hat{v}^2)}{\cos\theta(\ell^2-\hat{v}^2)+\ell^2+\hat{v}^2}, \quad v= \frac{2\cos\theta\ell^2\hat{v}}{\cos\theta(\ell^2-\hat{v}^2)+\ell^2+\hat{v}^2}.
\end{equation}

A geometric interpretation of these new variables is to parametrize $\overline{\mathbb{H}^2}_{v+iu}$ by the geodesics from $i\ell$ to points $\hat{v}\in\mathbb{R}\subset\mathbb{H}^2$, where for any of such geodesics, $\cos\theta$ is the euclidean distance from the origen, after we have identified $\mathbb{H}^2$ with the unit disk ($i\ell\leftrightarrow 0$). Hence 

\begin{equation}
\overline{\mathbb{H}^2}= \{v+iu \;|\; 0\leq u\} = \{(\hat{v},\theta) \;|\; 0\leq\theta\leq\frac{\pi}{2}\},
\end{equation}

where $i\ell$ is identified with the line $\mathbb{R}\times\{\frac{\pi}{2}\}$, $\mathbb{R}\times 0$ is identified with itself by the identity, and the geodesic from $i\ell$ to $\hat{v}$ is identified with the line $\{\hat{v}\}\times \left[0,\frac{\pi}{2}\right]$. Also, the Jacobian can be easily calculated as

\begin{equation}
\frac{\partial(v,u)}{\partial(\hat{v},\theta)} = \frac{2\cos\theta\ell^3(\ell^2+\hat{v}^2)}{(\cos\theta(\ell^2-\hat{v}^2)+\ell^2+\hat{v}^2)^2}
\end{equation}

In these new coordinates, the regions $R_2(\ell), C_\ell$ are defined by

\begin{align}
R_2(\ell):= \left\{(w,\hat{v},\theta)\;|\; 0\leq \theta\leq \frac{\pi}{2},\; \vert\hat{v}\vert\leq\ell,\; -\frac14\leq w \leq\frac14 \right\}\\
C_\ell:= \left\{(w,\hat{v},\theta)\;|\; 0\leq \theta\leq \frac{\pi}{2},\; b_\ell\leq\hat{v}\leq a_\ell,\; -\frac14\leq w \leq\frac14 \right\},
\end{align}

\begin{figure}
  \includegraphics[width=\linewidth]{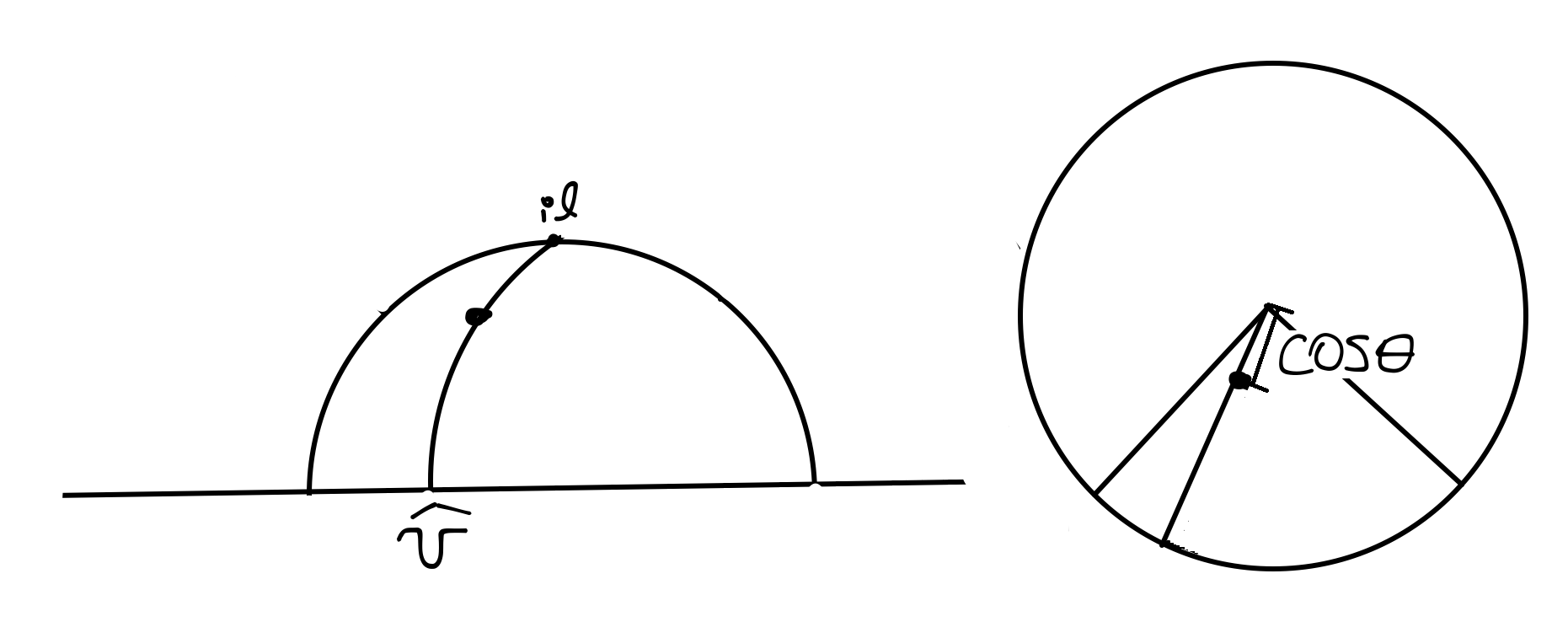}
  \caption{Coordinates of $R_2(\ell)$}
  \label{fig:fig3}
\end{figure}

The two limits we are looking at are:

\begin{equation}
\begin{split}
\lim_{\ell\to 0}{\rm FP}_{z=0}\int_{R_2(\ell) \setminus C_\ell} \rho_\ell^z \chi \frac{R^2dudvdw}{u^3} &=  \lim_{\ell\to 0}{\rm FP}_{z=0}\int_{R_2(\ell) \setminus C_\ell} \rho_\ell^z \chi \frac{2\ell^2\cos\theta d\theta d\hat{v}dw }{\ell^2+\hat{v}^2}\\
&= 0.
\end{split}
\end{equation}

\begin{equation}
\begin{split}
\lim_{\ell\to 0}{\rm FP}_{z=0}\int_{R_3(\ell) \setminus C_\ell} \rho_\ell^z \chi \frac{R^2dudvdw}{u^3} &=  \lim_{\ell\to 0}{\rm FP}_{z=0}\int_{R_3(\ell) \setminus C_\ell} \rho_\ell^z \chi \frac{2\ell^2\cos\theta d\theta d\hat{v}dw }{\ell^2+\hat{v}^2}\\
&= {\rm FP}_{z=0}\int_{R_3(0)} \rho_0^z \chi \frac{2\cos\theta d\theta d\hat{v}dw }{1+v^2}.
\end{split}
\end{equation}

Observe then that the following statements are sufficient to proof our result

\begin{equation}\label{R2lim}
\lim_{\ell\to 0}{\rm FP}_{z=0}\int_{-\frac14}^\frac14 \int_{\alpha_\ell}^{\beta_\ell} \int_0^{\frac{\pi}{2}} \rho_\ell^z \chi \frac{2\ell^2\cos\theta d\theta d\hat{v}dw }{\ell^2+\hat{v}^2} =0,
\end{equation}

for sequences $\vert\alpha_\ell\vert, \vert\beta_\ell\vert \leq \ell \leq \kappa_\ell$ all with limit $0$.

Rescale $\hat{V}=\frac{\hat{v}}{\ell}$ so now:

\[
{\rm FP}_{z=0}\int_{-\frac14}^\frac14 \int_{\kappa_\ell}^{2\delta} \int_0^{\frac{\pi}{2}} \rho_\ell^z \chi \frac{2\ell^2\cos\theta d\theta d\hat{v}dw }{\ell^2+v^2} = {\rm FP}_{z=0}\int_{-\frac14}^\frac14 \int_{\alpha_\ell/\ell}^{\beta_\ell/\ell} \int_0^{\frac{\pi}{2}} \rho_\ell^z \chi \frac{2\ell\cos\theta d\theta d\hat{V}dw }{1+\hat{V}^2}.
\]

As with $R_1(\ell)$, the integral is finite and dominated by 

\[
{\rm FP}_{z=0}\int_{-\frac14}^\frac14 \int_{-1}^{1} \int_0^{\frac{\pi}{2}} \frac{2\ell\cos\theta d\theta d\hat{V}dw }{1+\hat{V}^2} = \frac{\pi\ell}{2},
\]

so \eqref{R2lim} follows.

Finally, let us deal with the region $R_3(\ell)$. Similarly to the previous coordinates, we will parametrize $R_3(\ell)$ by geodesics joining $\tilde{v}$ and $i\ell$. Hence the coordinates $(\theta,\tilde{v})$ are defined by by

\[
v =  \frac{\tilde{v}}{2} - \frac{\ell^2}{2\tilde{v}} + \cos\theta\left( \frac{\tilde{v}}{2} + \frac{\ell^2}{2\tilde{v}}\right),\quad u = \sin\theta\left( \frac{\tilde{v}}{2} + \frac{\ell^2}{2\tilde{v}}\right)
\]

Here, $\theta$ is the counterclockwise angle on the half-circle joining $\tilde{v}$ and $i\ell$, as represented in Figure \ref{fig:fig4}.

\begin{figure}
  \includegraphics[width=\linewidth]{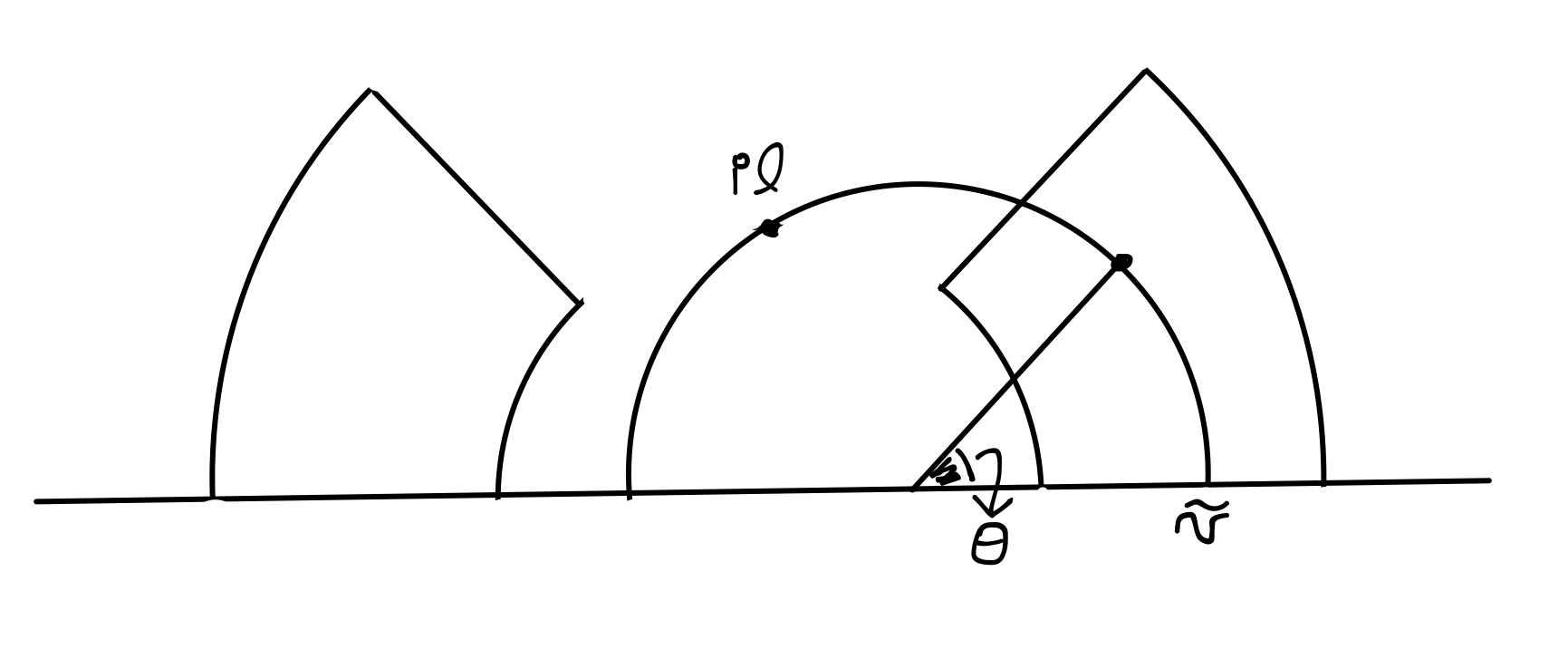}
  \caption{Coordinates of $R_3(\ell)$}
  \label{fig:fig4}
\end{figure}

The Jacobian of the change of variables can be calculated by

\[
\frac{\partial(v,u)}{\partial(\hat{v},\theta)} = \frac{\hat{v}}{4} \left( 1 + \frac{\ell^2}{\tilde{v}^2}\right) \left( 1 - \frac{\ell^2}{\tilde{v}^2} + \cos\theta\left( 1 + \frac{\ell^2}{\tilde{v}^2}\right)\right),
\]

and the representation of $R_3(\ell)$ as

\[
R_3(\ell) = \{ (w,\tilde{v},\theta) \;|\; 0\leq\theta\leq\frac{\pi}{2},\; \ell \leq \vert\tilde{v}\vert \leq 2\delta,\; -\frac14\leq w\leq \frac14 \}.
\]

Here there is actually an extra region already accounted by $R_1(\ell)$. Nevertheless, the result still follows while the notation is more simple.

Similar to the reasoning of \eqref{R2lim}, the desired limit will be

\begin{equation}\label{R3lim}
\begin{split}
&\lim_{\ell\to 0}{\rm FP}_{z=0}\int_{-\frac14}^\frac14 \int_{\kappa_\ell}^{2\delta} \int_0^{\frac{\pi}{2}} \rho_\ell^z \chi \frac{\left( 1 + \frac{\ell^2}{\tilde{v}^2} + \cos\theta\left( 1 - \frac{\ell^2}{\tilde{v}^2}\right)\right) \left( 1 - \frac{\ell^2}{\tilde{v}^2} + \cos\theta\left( 1 + \frac{\ell^2}{\tilde{v}^2}\right)\right)  d\theta d\tilde{v}dw}{\sin^3\theta \left( 1+ \frac{\ell^2}{\hat{v}^2}\right)} \\
&= {\rm FP}_{z=0}\int_{-\frac14}^\frac14 \int_{0}^{2\delta} \int_0^{\frac{\pi}{2}} \rho_0^z \chi \frac{\left(1 + \cos\theta \right)^2      d\theta d\tilde{v}dw}{\sin^3\theta}\\
\end{split},
\end{equation}
for some sequence $\kappa_\ell\geq\ell$ with limit equal to $0$.

With the notation of \eqref{hamjab}, $\rho_\ell = e^{\omega_\ell}\frac{u}{R} = e^{\omega_\ell}\sin\theta \sqrt{\frac{\left( 1+\frac{\ell^2}{\hat{v}^2}\right)}{2\left( 1 + \frac{\ell^2}{\hat{v}^2} + \cos\theta\left( 1 - \frac{\ell^2}{\hat{v}^2}\right)\right)}}$. And since $\ell\leq\hat{v}$, then $\left(\frac{\left( 1+\frac{\ell^2}{\hat{v}^2}\right)}{2\left( 1 + \frac{\ell^2}{\hat{v}^2} + \cos\theta\left( 1 - \frac{\ell^2}{\hat{v}^2}\right)\right)}\right) \leq \frac{1}{2}$ and $\omega_\ell$ has the expansion with respect to $\theta$ (as proved in [\cite{MoroianuGuillarmouRochon}, Proposition 6.7]):

\begin{equation}\label{omegaR3}
\omega_\ell = a_0 + a_2 \left(\frac{u}{R}\right)^2 + \mathcal{O}\left( \left(\frac{u}{R}\right)^3\right) = a_0 + a_2\sin^2\theta \left(\frac{\left( 1+\frac{\ell^2}{\hat{v}^2}\right)}{2\left( 1 + \frac{\ell^2}{\hat{v}^2} + \cos\theta\left( 1 - \frac{\ell^2}{\hat{v}^2}\right)\right)}\right) + \mathcal{O}\left( \theta^3\right),
\end{equation}

where $a_0, a_2$ depende on $\ell, \hat{v}$ and $w$ but not on $\theta$.

Then the finite part of \eqref{R3lim} can be decomposed as $I_1(\ell) + I_2(\ell)$, where:

\[
I_1(\ell) := {\rm FP}_{z=0}\int_{R_3(\ell)\setminus C_\ell} \chi\sin^z\theta \frac{\left( 1 + \frac{\ell^2}{\tilde{v}^2} + \cos\theta\left( 1 - \frac{\ell^2}{\tilde{v}^2}\right)\right) \left( 1 - \frac{\ell^2}{\tilde{v}^2} + \cos\theta\left( 1 + \frac{\ell^2}{\tilde{v}^2}\right)\right)  d\theta d\tilde{v}dw}{\sin^3\theta \left( 1+ \frac{\ell^2}{\hat{v}^2}\right)}  
\]
\[
I_2(\ell) := {\rm res}_{z=0}\int_{R_3(\ell)\setminus C_\ell}\chi\omega_\ell \sin^z\theta \frac{\left( 1 + \frac{\ell^2}{\tilde{v}^2} + \cos\theta\left( 1 - \frac{\ell^2}{\tilde{v}^2}\right)\right) \left( 1 - \frac{\ell^2}{\tilde{v}^2} + \cos\theta\left( 1 + \frac{\ell^2}{\tilde{v}^2}\right)\right)  d\theta d\tilde{v}dw}{\sin^3\theta \left( 1+ \frac{\ell^2}{\hat{v}^2}\right)}.
\]

Similar to the parallel case in the proof of [\cite{MoroianuGuillarmouRochon}, Prop. 8.2], we can observe that for $I_1(\ell)$

\[
I_1(\ell) = \int_{-\frac14}^{\frac14}\int_{\kappa_\ell}^{2\delta} q_1(\tilde{v}, \frac{\ell}{\tilde{v}}, w) d\tilde{v}dw
\]
for some smooth function $q_1$ independent from $\ell$. Then we can easily see that $\lim_{\ell\to 0} I_1(\ell) = I_1(0)$.

For $I_2(\ell)$, we can simplify by replacing $\omega_\ell$ with the first two terms of \eqref{omegaR3}. Then

\[
I_2(\ell) = \int_{-\frac14}^{\frac14}\int_{\kappa_\ell}^{2\delta} a_0 q_2(\tilde{v}, \frac{\ell}{\tilde{v}}, w) + a_2 q_3(\tilde{v},\frac{\ell}{\tilde{v}},w) d\tilde{v}dw,
\]
for some smooth functions $q_2, q_3$ independent from $\ell$, and $a_0, a_2$ given by (from [\cite{MoroianuGuillarmouRochon}, Proposition 6.7])

\[
a_0(w,\tilde{v})=\varphi_\ell(w,\tilde{v})
\],  
\[
a_2(w,\tilde{v}) = -\frac14 |d\varphi_{\ell}|^2_{h_{\ell}}+ \frac{C_1\ell^2+C_2v\partial_w\varphi_\ell}{(\ell^2+v^2)} +C_3v\partial_v\varphi_\ell +\frac12,
\]
where $C_1, C_2, C_3$ are constants smoothly depending on $\nu$. Then using [\cite{MoroianuGuillarmouRochon}, Proposition 5.1], [\cite{MoroianuGuillarmouRochon}, Corollary 5.3] for the integral convergence of $\varphi_\ell,d\varphi_\ell$ to $\varphi_0, d\varphi_0$, we can see that $\lim_{\ell\to 0} I_2(\ell) = I_2(0)$. To see that this concludes all the cases, see that the integral with limits $-2\delta \leq \tilde{v} \leq -\kappa_\ell$ follows by analogy and the integral with limits $\alpha_\ell \leq \tilde{v} \leq \beta_\ell$ converges to $0$ for $\alpha_\ell,\beta_\ell \rightarrow 0$ either both greater than $\ell$ or smaller than $-\ell$. Then the proof of Proposition \ref{nearthecusp} is finished.

\end{proof}
\end{proof}

\section{Consequences}\label{sec:consequences}

In order to describe the infimum of $V_R$, let us set some notation.

\begin{defi}\label{defacy} A pair $(M,P)$ is a paired acylindrical manifold if $M$ is a compact irreducible $3$-manifold and $P\subseteq \partial M$ is a collection of incompressible tori and annuli such that:
\begin{itemize}
	\item Every non-cyclic subgroup of $\pi_1(M)$ is conjugated to some component of $P$.
	\item Every essential cylinder in $(M,\partial M)$ is isotopic to a component of $P$.
\end{itemize}
\end{defi}

Compare to [\cite{Morgan84}, Definition 4.8]. Moreover, using that exact same chapter (more precisely Theorema A and B \cite{Morgan84}), we know that every paired manifold with $\partial M\neq\emptyset$ different from the unit ball is hyperbolizable, with a geometrically finite metric and $P$ corresponding to the parabolic locus. Likewise, we can also use the term acylindrical for a (possibly cusped) hyperbolic manifold.

\begin{defi} A geometrically finite hyperbolic manifold $N$ is said acylindrical if $(N,P)$ is an acylindrical paired manifold, where $P$ is the parabolic locus of $N$ in $\partial N$.
\end{defi}

Then, as in \cite{Morgan84}, if $N$ is an acylindrical hyperbolic manifold then there is a hyperbolic metric in $N$ with the same parabolic locus which has totally geodesic boundary.

\begin{theorem}\label{mainconsequence}
Let $M$ be a hyperbolizable compact $3$-manifold where $\partial M\neq\infty$ is incompressible and has no tori boundary components. Then
\[\inf V_\text{R}(M)= \frac{v_3}{2} \Vert DM \Vert
\]
where $v_3$ is the volume of the regular ideal tetrahedron in $\mathbb{H}^3$, $DM$ is the double of the manifold $M$ and $\Vert\cdot\Vert$ denotes the Gromov norm of a manifold. Moreover, for any sequence $\{M_n\}$ such that $\lim_{n\rightarrow\infty}V_\text{R}(M_n)=\inf V_\text{R}(M)$, there exist a decomposition of $M$ along essential cutting cylinders in components $A_1\sqcup \ldots \sqcup A_s\sqcup F_1\sqcup \ldots\sqcup F_r$ (with $A_1,\ldots,A_s$ acylindrical and $F_1,\ldots,F_r$ fuchsian) such that $A_1 \sqcup \ldots\sqcup A_s\sqcup F_1\sqcup \ldots\sqcup F_r$ is the additive geometric limit of a subsequence of $\{M_n\}$.

\end{theorem}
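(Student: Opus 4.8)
The plan is to establish the two assertions of Theorem \ref{mainconsequence} separately: first the identification of the infimum value with $\frac{v_3}{2}\|DM\|$, and second the structural description of the additive geometric limit of any minimizing sequence. I would begin with the structural statement, since the value computation will follow from understanding what the limit components must be. Starting from a minimizing sequence $\{M_n\}$, the key observation is that by [\cite{BridgemanCanary15}, Theorem 1.1] the convex core volumes $V_C(M_n)$ are uniformly bounded (the renormalized volume and convex core volume differ by a controlled amount), so Proposition \ref{proplimit} applies and, after passing to a subsequence, we obtain an additive geometric limit $N_1\sqcup\ldots\sqcup N_k$ along essential cutting cylinders. By Theorem \ref{continuitythm} we have $\lim_n V_R(M_n)=\sum_{i=1}^k V_R(N_i)$, so each $N_i$ realizes a value that is part of the infimum decomposition.

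The crux is to show that each limit component $N_i$ must be a critical point of $V_R$, hence (as explained in Section \ref{sec:background}) has totally geodesic convex core boundary, forcing it to be either fuchsian or acylindrical. The mechanism I would use is Proposition \ref{smalldef}: if some $N_i$ were \emph{not} a critical point, then there exists a direction $v\in RQ$ along which $DV_R(v)\neq 0$, so one could deform $N_i$ slightly to strictly decrease $V_R(N_i)$. Proposition \ref{smalldef} then produces a new sequence $\widehat{M}_n\in QF(M)$ having the perturbed limits as its additive geometric limit, and by Theorem \ref{continuitythm} again, $\lim_n V_R(\widehat{M}_n)=\sum_i V_R(\widehat{N}_i)<\sum_i V_R(N_i)=\inf V_R(M)$, contradicting that the original value was the infimum. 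Therefore every $N_i$ is critical, its convex core is totally geodesic, and the cutting cylinders along which the $N_i$ are glued correspond exactly to the essential cylinders of the decomposition $A_1\sqcup\ldots\sqcup A_s\sqcup F_1\sqcup\ldots\sqcup F_r$; the acylindrical pieces $A_j$ are precisely those $N_i$ with no further essential cylinders, and the fuchsian pieces $F_j$ are the components whose convex core degenerates to a totally geodesic surface.

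For the value of the infimum, once we know each component has totally geodesic boundary, I would compute $V_R(N_i)$ directly. For a manifold with totally geodesic convex core boundary the renormalized volume equals the volume of the convex core (the $W$-volume boundary correction term $\frac14\int H\,da$ vanishes when $\II_0=0$ and the boundary is totally geodesic, so $H=0$), and doubling $N_i$ along its totally geodesic boundary produces a closed (or cusped finite-volume) hyperbolic manifold $DN_i$ with $\mathrm{vol}(DN_i)=2\,\mathrm{vol}(C_{N_i})$. By the Gromov--Thurston theory relating hyperbolic volume to simplicial volume, $\mathrm{vol}(DN_i)=v_3\|DN_i\|$, giving $V_R(N_i)=\mathrm{vol}(C_{N_i})=\tfrac12 v_3\|DN_i\|$. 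Summing over $i$ and using that the Gromov norm is additive under gluing along incompressible tori (the doubled cutting cylinders), together with the fact that the double of the whole decomposition reassembles $DM$, yields $\inf V_R(M)=\sum_i \tfrac12 v_3\|DN_i\|=\tfrac{v_3}{2}\|DM\|$.

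The main obstacle I anticipate is the bookkeeping that ties together the gluing of the limit components with the double $DM$ and the additivity of the Gromov norm. One must verify carefully that the cutting cylinders separating the $N_i$ are precisely dual to the essential cylinders in the characteristic (JSJ/Jaco--Shalen--Johannson) decomposition of $M$, so that doubling each $N_i$ along its totally geodesic boundary and summing the simplicial volumes genuinely recovers $\|DM\|$ rather than an under- or over-count. This requires that the additive geometric limit be a \emph{complete} decomposition along all essential cylinders (that no further pinching is forced and none is missed), which is exactly where criticality of the components is essential: a component with a residual essential cylinder would not have totally geodesic boundary and could be decomposed further to lower the volume. Making this matching between the geometric degeneration data and the topological decomposition fully rigorous, and confirming the additivity of $\|\cdot\|$ across the doubled tori via the Gromov norm gluing formula, is the delicate part; the existence and criticality arguments via Propositions \ref{proplimit}, \ref{smalldef} and Theorem \ref{continuitythm} are comparatively routine given the machinery already developed.
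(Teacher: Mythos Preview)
Your overall strategy---extract an additive geometric limit, use Proposition \ref{smalldef} and Theorem \ref{continuitythm} to force each $N_i$ to be critical, then compute via the Gromov norm of the double---matches the paper. However, there is a genuine gap.

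Criticality tells you each $N_i$ has totally geodesic convex core boundary, but it does \emph{not} tell you that the $N_i$ arise from $M$ only by cutting along essential cylinders. Proposition \ref{proplimit} explicitly allows three mechanisms: cutting cylinders, \emph{pinching} peripheral curves to rank-$1$ cusps, and \emph{drilling} interior curves to rank-$2$ cusps. An acylindrical $N_i$ with rank-$2$ cusps (from drilling) or with rank-$1$ cusps coming from pinching rather than cutting is still a perfectly good critical point for $V_R$ on its own deformation space. In that situation, gluing the doubles $DN_i$ along the tori coming from cutting cylinders does not reassemble $DM$; it reassembles $DM^*$, the double of $M$ minus the drilled and pinched curves. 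Gromov norm additivity then gives $\sum_i \|DN_i\| = \|DM^*\|$, and since $DM$ is obtained from $DM^*$ by Dehn filling, one has $\|DM^*\| \geq \|DM\|$ with strict inequality whenever at least one curve was pinched or drilled. Your computation would then yield $\inf V_R(M) = \tfrac{v_3}{2}\|DM^*\| > \tfrac{v_3}{2}\|DM\|$, which is not what you want.

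The paper closes this gap by a comparison argument you are missing: it independently constructs, via Klein--Maskit combinations and generalized Dehn filling, a sequence in $QF(M)$ whose additive geometric limit is a decomposition $A_1\sqcup\ldots\sqcup A_s\sqcup F_1\sqcup\ldots\sqcup F_r$ of $M$ itself along essential cylinders (no drilling, no pinching). By Theorem \ref{continuitythm} that sequence has limiting renormalized volume $\tfrac{v_3}{2}\|DM\|$. If the original minimizing limit involved any drilling or pinching, the strict inequality above would contradict minimality. This is the step that forces the limit to be a pure cutting-cylinder decomposition and simultaneously pins down the infimum value; your ``residual essential cylinder'' remark in the last paragraph addresses a different (and easier) concern.
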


\begin{proof}
Indeed, because of Proposition \ref{proplimit} and Theorem \ref{continuitythm}, any sequence $M_n\in QF(M)$ where $\lim_{n\rightarrow\infty}V_\text{R}(M_n)=\inf V_\text{R}(M)$ has a subsequence with additive geometric limit $N_1\sqcup \ldots \sqcup N_k$ and 
\[\inf V_\text{R}(M) = \lim_{n\rightarrow\infty}V_\text{R}(M_n) = \sum_{i=1}^{k} V_R(N_i).\] 
Because of Proposition \ref{smalldef}, any small deformation of $N_1 \sqcup\ldots\sqcup N_k$ is the additive geometric limit of another sequence $\widehat{M}_n\in QF(M)$. Then $N_1,\ldots,N_k$ are critical points for $V_R$, which implies that their convex cores have totally geodesic boundaries. Hence each $N_i$ is either acylindrical or fuchsian (depending if the convex core has non-empty interior or not), although the acylindrical components could arrive from pinching, drilling and cutting cylinders instead of just the later as stated in the theorem. The next step is to notice that pinching and drilling increases $V_\text{R}$. so let us assume that at least one curve gets pinched or drilled while converging to $N_1,\ldots,N_k$.

Consider $DN_i$, the double of the manifolds $N_i$. Each acylindrical component doubles into a finite volume hyperbolice manifold by doubling along the geodesic boundary of $C_(N_i)$. Each fuchsian component doubles into a Seifert fibered manifold, in fact as the product of a finite type surface $S$ with $S^1$. The cusps from $N_i$ give rank-$2$ cusp in the following pattern:

\begin{itemize}
	\item A rank-$2$ cusp (which is only obtained by drilling) gives two rank-$2$ cusps in $DN_i$, one per copy of $C_{N_i}$ in $DN_i$.
	\item A rank-$1$ cusp (obtained by either pinching or a cutting cylinder) gives one rank-$2$ cusp in $DN_i$.
\end{itemize}

Moreover, if we glue $N_1,\ldots,N_k$ along paired rank-$1$ cusps (paired by cutting cylinders) we obtain a manifold $M^*$ that topologically is $M$ minus the drilled curves. Then we can glue $DN_1,\ldots,DN_k$ along paired cusps (again, paired by cutting cylinders) to obtain $DM^*$, which is $DM$ minus some curves (two for each drilled curve and one for each pinched curve). Then $DN_1,\ldots,DN_k$ is a decomposition of $DM^*$ along incompressible tori into finite-volume hyperbolic manifold or Seifert fibered manifolds, where now cutting cylinders can be seen as cutting tori and each component is (finite-volume) hyperbolic or Seifert fibered. Given Gromov's theorem (as seen in [\cite{Thurston6}, Theorem 6.2]) we can relate the renormalized volume to the Gromov norm $\Vert \cdot\Vert$:

\begin{equation}\label{VRGromov}
V_R(N_i)=\frac{v_3}{2}\Vert DN_i\Vert 
\end{equation}

where $v_3$ is the volume of the regular ideal tetrahedron in $\mathbb{H}^3$. Indeed, when $N_i$ is acylindrical, $V_R(N_i)$ is half the hyperbolic volume of $DN_i$, which is equal to $v_3\Vert DN_i\Vert$. When $N_i$ is Fuchsian both sides of the equality vanish.

Now, for fixed large $n$, consider $M_i$ equal to the double of $M^i_n$ along $\partial_0 M^i_n$. Noticing that cutting cylinders are glued into tori, we can paste along those tori to obtain $DM$ from $M_1,\ldots,M_k$. Now, we can divide each $M^i_n$ by essential cylinders until each component forms an acylindrical pair with the mentioned cylinders, hence hyperbolizable with totally geodesic convex core by the discussion after Definition \ref{defacy}. As before, these essential cylinders double into essential tori in $M_i$ that divide it into components that are either finite volume hyperbolic or Seifert fibered, depending if the convex core of the corresponding component had empty interior or not. Hence we have a decomposition of $M$ as in the statement of this theorem, so we will label the components as we did there. The decomposition $A_1,\ldots,A_s,F_1,\ldots,F_r$ is a subdecomposition of $M^1_n,\ldots,M^k_n$

From [\cite{Thurston6}, Proposition 6.5.2] and [\cite{Thurston6}, Theorem 6.5.6], since $M_i$ can be obtained from $N_i$ by filling some cusps, we have

\begin{equation}\label{Gromovineq}
\Vert N_i\Vert \geq \Vert M_i \Vert
\end{equation}

where the inequality is strict if we fill at least one cusp.

Also, by applying [\cite{Thurston6}, Proposition 6.5.2,] and [\cite{Thurston6}, Theorem 6.5.5] to each $M_i$ and then add them up, we have

\begin{equation}\label{Gromovhyp}
\sum_{i=1}^{k} \Vert M_i \Vert = \sum_{j=1}^{s} \Vert A_j \Vert = \Vert DM \Vert
\end{equation}

Putting (\ref{VRGromov}),(\ref{Gromovineq}) and (\ref{Gromovhyp}) together and recalling that at least one curve was pinched or drilled

\begin{equation}
\sum_{i=1}^{k} V_R(N_i) > \frac{v_3}{2} \Vert DM \Vert
\end{equation}

We will then contradict that $N_1,\ldots,N_k$ was obtained as the infimum sequence with at least one curve being pinched or drilled as soon as we observe that  $A_1,\ldots,A_s,F_1,\ldots,F_r$ can be also obtained as limit. Indeed, as in the proof of Proposition \ref{smalldef}, by doing Klein-Maskit combinations and generalized hyperbolic Dehn-fillings we can obtain a sequence of geometrically finite hyperbolic manifolds homeomorphic to $M$ with limit $A_1,\ldots,A_s,F_1,\ldots,F_r$.

\end{proof}

From this we can easily see the following corollary for quasifuchsian manifolds

\begin{cor} Let $M$ be a quasifuchsian manifold. Then $V_\text{R}(M)\geq 0$ with equality if and only if $M$ is Fuchsian.
\end{cor}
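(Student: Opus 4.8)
The plan is to derive this directly from Theorem \ref{mainconsequence} together with the critical point characterization recalled in Section \ref{sec:background}. A quasifuchsian manifold is homeomorphic to $S\times\mathbb{R}$ for a closed hyperbolic surface $S$, so it has incompressible boundary and no torus boundary components, and Theorem \ref{mainconsequence} applies. First I would identify the double: doubling $S\times[0,1]$ along its two boundary copies of $S$ glues the product to itself along $S\times\partial[0,1]$ and yields $DM\cong S\times S^1$. Since the self-map $\mathrm{id}_S\times(z\mapsto z^d)$ has degree $d$, functoriality of the Gromov norm gives $|d|\,\Vert S\times S^1\Vert=\Vert d\,[S\times S^1]\Vert\leq \Vert S\times S^1\Vert$, which for $|d|\geq 2$ forces $\Vert DM\Vert=\Vert S\times S^1\Vert=0$. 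Hence Theorem \ref{mainconsequence} yields $\inf V_\text{R}(M)=\frac{v_3}{2}\Vert DM\Vert=0$, and by the definition of the infimum over $QF(M)$ every convex co-compact structure satisfies $V_\text{R}(M)\geq 0$.

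For the equality, the easy direction is that a Fuchsian manifold has totally geodesic (degenerate) convex core, so the relation \eqref{VRGromov}, which holds for any manifold with totally geodesic convex core boundary, gives $V_\text{R}(M)=\frac{v_3}{2}\Vert DM\Vert=0$ directly. For the converse, suppose $V_\text{R}(M)=0=\inf V_\text{R}(M)$. Then $M$ is a global minimum of the smooth function $V_\text{R}$ on $QF(M)\cong\mathcal{T}(\partial M)$; since Teichm\"uller space has no boundary, this minimum is interior, so $DV_\text{R}$ vanishes at $M$ and $M$ is a critical point. By the discussion in Section \ref{sec:background}, at a critical point the traceless second fundamental form $\II_0$ vanishes identically, so the boundary of the convex core is totally geodesic. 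It then remains to argue that a quasifuchsian manifold with totally geodesic convex core boundary must be Fuchsian.

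The main obstacle is exactly this last step. Both components of $\partial C_M$ would be closed totally geodesic surfaces isotopic to $S$; since homotopic closed totally geodesic surfaces in a hyperbolic $3$-manifold must coincide, the two components collapse to a single totally geodesic surface, the convex core has empty interior, and $M$ is Fuchsian. The only other point demanding care is the vanishing of the Gromov norm of $S\times S^1$, which I would settle by the degree argument above (Gromov's vanishing for products with a circle). Together these give $V_\text{R}(M)\geq 0$ with equality precisely in the Fuchsian locus.
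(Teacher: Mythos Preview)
Your argument is correct. The inequality $V_\text{R}(M)\geq 0$ is obtained exactly as the paper intends: identify $DM\cong S\times S^1$, observe that its Gromov norm vanishes, and apply Theorem \ref{mainconsequence}. Your degree argument for $\Vert S\times S^1\Vert=0$ is a clean way to justify a fact the paper takes for granted.

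For the equality case you diverge from the paper. The paper's (implicit) route is to feed the constant sequence $M_n=M$ back into Theorem \ref{mainconsequence}: the additive geometric limit is $M$ itself, the decomposition of the theorem must satisfy $\sum V_\text{R}=0$, and since any acylindrical piece would contribute $\frac{v_3}{2}\Vert DA_j\Vert>0$, all pieces are Fuchsian; with no degeneration there is a single piece, so $M$ is Fuchsian. You instead bypass the limit machinery and argue directly that a global minimizer on the boundaryless manifold $\mathcal{T}(\partial M)$ is a critical point, invoke the Schwarzian/critical-point discussion of Section \ref{sec:background} to get totally geodesic convex core boundary, and then use uniqueness of a totally geodesic surface in its homotopy class to collapse the two boundary components. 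This is a legitimate and arguably more elementary path: it uses only the first-variation formula and a standard rigidity fact, rather than the full additive-continuity Theorem \ref{continuitythm} and the deformation argument of Proposition \ref{smalldef} hidden inside Theorem \ref{mainconsequence}. The paper's route, on the other hand, immediately yields the stronger statement in the sentence following the corollary (that \emph{any} minimizing sequence has only Fuchsian limit pieces), which your critical-point argument does not address. Note also that in the quasifuchsian case the Section \ref{sec:background} argument already gives ``the components of the domain of discontinuity are round disks'', which is Fuchsian on the nose, so your final step about coinciding totally geodesic surfaces, while correct, is not strictly needed.
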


Moreover, any sequence such that $V_\text{R}\rightarrow 0$  must converge to a disjoint union of Fuchsian manifolds, since there cannot be a non-zero volume in Theorem \ref{mainconsequence}.

Also, since for a acylindrical manifold $M$ there cannot be cutting cylinders, there is only one possible geometric limit under our conditions. Hence from Theorem \ref{mainconsequence} we have (also proved in \cite{Vargas16})

\begin{cor} Let $M$ be a acylindrical hyperbolizable $3$-manifold. Then any sequence $M_n\in QF(M)$ such that $\lim_{n\rightarrow\infty}V_\text{R}(M_n)=\inf V_\text{R}(M)$ converges geometrically to $M_{tg}\in QF(M)$, the metric with convex core totally geodesic.  
\end{cor}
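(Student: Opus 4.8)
The plan is to specialize Theorem \ref{mainconsequence} to the acylindrical setting and observe that acylindricity collapses the decomposition to a single piece. First I would note that since $V_\text{R}(M_n)$ converges it is bounded, so by [\cite{BridgemanCanary15}, Theorem 1.1] the convex-core volumes $V_\text{C}(M_n)$ are uniformly bounded. Proposition \ref{proplimit} then produces, after passing to a subsequence, an additive geometric limit $N_1 \sqcup \ldots \sqcup N_k$, and the hypothesis $\lim_n V_\text{R}(M_n) = \inf V_\text{R}(M)$ places us exactly in the framework of Theorem \ref{mainconsequence}.

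The key structural step is to show $k=1$. By Proposition \ref{proplimit}, distinct geometric limits $N_i$ are separated precisely by cutting cylinders, and every cutting cylinder is an essential cylinder in $M$. Since $M$ is acylindrical it admits no essential cylinders, so no cutting cylinders occur; hence there is a single geometric limit $N_1$, and the limit is independent of the basepoint chosen inside $C_{M_n}$. Consequently the decomposition $A_1 \sqcup \ldots \sqcup A_s \sqcup F_1 \sqcup \ldots \sqcup F_r$ furnished by Theorem \ref{mainconsequence} degenerates: with no cylinders to cut along, it reduces to the single acylindrical piece $M$ itself (and no Fuchsian pieces, since the convex core of an acylindrical $M$ with incompressible boundary has nonempty interior).

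Next I would rule out the formation of cusps. As established inside the proof of Theorem \ref{mainconsequence}, pinching a peripheral curve or drilling a curve into a rank-$2$ cusp strictly raises the limiting renormalized volume through the strict Gromov-norm inequality \eqref{Gromovineq}. Since our sequence attains the infimum and Theorem \ref{continuitythm} gives $\inf V_\text{R}(M) = V_\text{R}(N_1)$, any pinching or drilling would contradict minimality. Therefore $N_1$ carries no parabolics and is homeomorphic to $M$, so $N_1 \in QF(M)$. The infimum further forces $N_1$ to be a critical point of $V_\text{R}$, whence $\II_0 \equiv 0$ on $\partial C_{N_1}$ and the convex-core boundary is totally geodesic. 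By the discussion following Definition \ref{defacy} (Morgan's hyperbolization together with the rigidity of the doubled manifold), an acylindrical $M$ carries a \emph{unique} convex-cocompact metric with totally geodesic convex-core boundary, namely $M_{tg}$; hence $N_1 = M_{tg}$. Because $k=1$ and no cusps appear, the additive geometric limit is honest geometric convergence of the entire sequence to $M_{tg}$.

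The main obstacle is the step excluding pinching and drilling: it is what guarantees the limit stays in the topological type of $M$ (inside $QF(M)$) rather than degenerating to a cusped manifold, and it rests on the strict monotonicity \eqref{Gromovineq}. Once $N_1$ is known to be a critical point lying in $QF(M)$, the uniqueness of $M_{tg}$ and the upgrade from an additive geometric limit to genuine geometric convergence are immediate consequences of $k=1$.
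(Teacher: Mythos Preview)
Your argument is correct and follows the same route as the paper: acylindricity forbids cutting cylinders, so the additive geometric limit produced by Theorem~\ref{mainconsequence} has a single component, which must be $M_{tg}$. The paper compresses this into one sentence, while you spell out the internal mechanism (ruling out pinching and drilling via the strict inequality \eqref{Gromovineq}, then identifying the limit as the unique totally geodesic representative); note that this step is already contained in the \emph{conclusion} of Theorem~\ref{mainconsequence}, so you could invoke it directly rather than re-deriving it. One small point worth making explicit: you pass to a subsequence at the start but assert convergence of the full sequence at the end; this is justified because every subsequence has a further subsequence with the same (unique) limit $M_{tg}$, hence the original sequence itself converges.
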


\bibliographystyle{amsalpha}
\bibliography{mybib}

\providecommand{\bysame}{\leavevmode\hbox to3em{\hrulefill}\thinspace}
\providecommand{\MR}{\relax\ifhmode\unskip\space\fi MR }
% \MRhref is called by the amsart/book/proc definition of \MR.
\providecommand{\MRhref}[2]{%
  \href{http://www.ams.org/mathscinet-getitem?mr=#1}{#2}
}
\providecommand{\href}[2]{#2}
\begin{thebibliography}{BBCM}

\bibitem[AB60]{AhlforsBers}
Lars Ahlfors and Lipman Bers,
  \emph{\href{http://dx.doi.org/10.2307/1970141}{Riemann's mapping theorem for
  variable metrics}}, Ann. of Math. (2) \textbf{72} (1960), 385--404.
  \MR{0115006 (22 \#5813)}

\bibitem[BBB]{BBB17}
Martin Bridgeman, Jeffrey Brock, and Kenneth Bromberg, \emph{Schwarzian
  derivatives, projective structures, and the weil-petersson gradient flow for
  renormalized volume},
  \href{https://arxiv.org/abs/1704.06021}{arXiv:1704.06021 [math.DG]}.

\bibitem[BBCM]{BBCM16}
Jeffrey~F. Brock, Kenneth~W. Bromberg, Richard~D. Canary, and Yair~N. Minsky,
  \emph{Windows, cores and skinning maps},
  \href{https://arxiv.org/abs/1601.05482}{arXiv:1601.05482 [math.GT]}.

\bibitem[BC]{BridgemanCanary15}
Martin Bridgeman and Richard~D. Canary, \emph{Renormalized volume and the
  volume of the convex core},
  \href{http://arxiv.org/abs/1502.05018v2}{arXiv:1502.05018v2 [math.DG]}.

\bibitem[BO88]{BonahonOtal88}
Francis Bonahon and Jean-Pierre Otal,
  \emph{\href{http://dx.doi.org/10.1112/blms/20.3.255}{Vari\'et\'es
  hyperboliques \`a g\'eod\'esiques arbitrairement courtes}}, Bull. London
  Math. Soc. \textbf{20} (1988), no.~3, 255--261. \MR{931188}

\bibitem[BS01]{BenjaminiSchramm}
Itai Benjamini and Oded Schramm,
  \emph{\href{http://dx.doi.org/10.1214/EJP.v6-96}{Recurrence of distributional
  limits of finite planar graphs}}, Electron. J. Probab. \textbf{6} (2001), no.
  23, 13. \MR{1873300}

\bibitem[DaU09]{DiazUshijima}
Raquel D\'\i~az and Akira Ushijima,
  \emph{\href{http://www.ams.org/mathscinet-getitem?mr=2471563}{On the
  properness of some algebraic equations appearing in {F}uchsian groups}},
  Topology Proc. \textbf{33} (2009), 81--106. \MR{2471563}

\bibitem[DM69]{DeligneMumford}
P.~Deligne and D.~Mumford,
  \emph{\href{http://www.numdam.org/item?id=PMIHES_1969__36__75_0} {The
  irreducibility of the space of curves of given genus}}, Inst. Hautes \'Etudes
  Sci. Publ. Math. (1969), no.~36, 75--109. \MR{0262240}

\bibitem[Eps84]{Epstein}
Charles~L. Epstein, \emph{Envelopes of horospheres and weingarten surfaces in
  hyperbolic 3-space}, pre-print, Princeton University, 1984.

\bibitem[GMR]{MoroianuGuillarmouRochon}
Collin Guillarmou, Sergiu Moroianu, and Fr\'{e}d\'{e}ric Rochon,
  \emph{Renormalized volume on the {T}eichm\"{u}ller space of punctured
  surfaces}, \href{http://arxiv.org/abs/1504.04721}{arXiv:1504.04721
  [math.DG]}.

\bibitem[JM90]{JorgensenMarden}
T.~J{\o}rgensen and A.~Marden,
  \emph{\href{http://www.ams.org/mathscinet-getitem?mr=1060898}{Algebraic and
  geometric convergence of {K}leinian groups}}, Math. Scand. \textbf{66}
  (1990), no.~1, 47--72. \MR{1060898 (91f:30068)}

\bibitem[KS08]{KrasnovSchlenker}
Kirill Krasnov and Jean-Marc Schlenker,
  \emph{\href{http://dx.doi.org/10.1007/s00220-008-0423-7}{On the renormalized
  volume of hyperbolic 3-manifolds}}, Comm. Math. Phys. \textbf{279} (2008),
  no.~3, 637--668. \MR{2386723 (2010g:53144)}

\bibitem[Mar07]{Mardenbook}
A.~Marden, \emph{\href{http://dx.doi.org/10.1017/CBO9780511618918}{Outer
  circles}}, Cambridge University Press, Cambridge, 2007, An introduction to
  hyperbolic 3-manifolds. \MR{2355387}

\bibitem[Mas88]{Maskit88}
Bernard Maskit,
  \emph{\href{http://www.ams.org/mathscinet-getitem?mr=959135}{Kleinian
  groups}}, Grundlehren der Mathematischen Wissenschaften [Fundamental
  Principles of Mathematical Sciences], vol. 287, Springer-Verlag, Berlin,
  1988. \MR{959135}

\bibitem[Mor]{Moroianu}
Sergiu Moroianu, \emph{Convexity of the renormalized volume of hyperbolic
  3-manifolds}, \href{http://arxiv.org/abs/1503.07981}{arXiv:1503.07981
  [math.DG]}.

\bibitem[Mor84]{Morgan84}
John~W. Morgan, \emph{\href{http://dx.doi.org/10.1016/S0079-8169(08)61637-2}{On
  {T}hurston's uniformization theorem for three-dimensional manifolds}}, The
  {S}mith conjecture ({N}ew {Y}ork, 1979), Pure Appl. Math., vol. 112, Academic
  Press, Orlando, FL, 1984, pp.~37--125. \MR{758464}

\bibitem[RS99]{RivinSchlenker}
Igor Rivin and Jean-Marc Schlenker,
  \emph{\href{http://dx.doi.org/10.1090/S1079-6762-99-00057-8}{The {S}chl\"afli
  formula in {E}instein manifolds with boundary}}, Electron. Res. Announc.
  Amer. Math. Soc. \textbf{5} (1999), 18--23 (electronic). \MR{1669399
  (2000a:53076)}

\bibitem[Sch13]{Schlenker13}
Jean-Marc Schlenker,
  \emph{\href{http://dx.doi.org/10.4310/MRL.2013.v20.n4.a12}{The renormalized
  volume and the volume of the convex core of quasifuchsian} manifolds}, Math.
  Res. Lett. \textbf{20} (2013), no.~4, 773--786. \MR{3188032}

\bibitem[Sco73]{Scott73}
G.~P. Scott, \emph{\href{http://dx.doi.org/10.1112/jlms/s2-7.2.246}{Compact
  submanifolds of {$3$}-manifolds}}, J. London Math. Soc. (2) \textbf{7}
  (1973), 246--250. \MR{0326737}

\bibitem[Thua]{Thurston5}
William~P. Thurston, \emph{The geometry and topology of three-manifolds},
  \href{http://library.msri.org/books/gt3m/PDF/5.pdf}{Chapter 5: Flexibility
  and rigidity of geometric structures}.

\bibitem[Thub]{Thurston6}
\bysame, \emph{The geometry and topology of three-manifolds},
  \href{http://library.msri.org/books/gt3m/PDF/6.pdf}{Chapter 6: Gromov's
  invariant and the volume of a hyperbolic manifold}.

\bibitem[VPa]{Vargas16}
Franco Vargas~Pallete, \emph{Continuity of the renormalized volume under
  geometric limits}, \href{https://arxiv.org/abs/1605.07986}{arXiv:1605.07986
  [math.DG]}.

\bibitem[VPb]{Vargas15}
\bysame, \emph{Local convexity of renormalized volume for rank-1 cusped
  manifolds}, \href{https://arxiv.org/abs/1505.00479}{arXiv:1505.00479
  [math.DG]}.

\end{thebibliography}

\end{document}